\crefname{secinapp}{Section}{Sections}
\Crefname{secinapp}{Section}{Sections}
\newlength{\dhatheight}
\newtheorem{theorem}{Theorem}[section]
\newtheorem{lemma}[theorem]{Lemma}
\newtheorem{proposition}[theorem]{Proposition}
  \newcommand{\Ex}{\mathbb{E}}
\newcommand{\p}{\mathbb{P}}
\def\u{{\bf u}}
\begin{document}
\begin{frontmatter}
\title{A continuous-time model of
  centrally 
  coordinated
  motion with random switching 
  }

\author[byu1]{J. C. Dallon}

\author[byu1]{Lynnae C. Despain}

\author[byu1]{Emily J. Evans\corref{cor1}}
\ead{ejevans@math.byu.edu}

\author[byu1]{Christopher P. Grant}

\author[byu1]{W. V. Smith}

\cortext[cor1]{Corresponding author}

\address[byu1]{Department of Mathematics,
  Brigham Young University,
  Provo, Utah 84602, USA}

\begin{abstract}
This paper considers differential problems with random switching, with specific applications to the motion of cells and centrally 
coordinated
motion.  
Starting with a differential-equation model of cell motion that was proposed previously, we set the relaxation time to zero and consider the simpler model that results.  
We prove that this model is well-posed, in the sense that it corresponds to a pure jump-type continuous-time Markov process (without explosion).
We then describe the model's long-time behavior, first by specifying an attracting steady-state distribution for a projection of the model, then by examining the expected location of the cell center when the initial data is compatible with that steady-state.  
Under such conditions, we present a formula for the expected velocity and give a rigorous proof of that formula's validity.
We conclude the paper with a comparison between these theoretical results and the results of numerical simulations.
\end{abstract}

\begin{keyword}
random switching, differential equations, Markov process
\end{keyword}

\end{frontmatter}

\section{Introduction}
This paper studies the motion of cells, and at the same time certain larger questions surrounding differential problems with random switching. Cell motion is fundamental in many systems
including wound healing \cite{Krawczyk:1971:PEC,Tanner:2009:CMC}, cancer \cite{Yilmaz:2010:MMM}, and morphogenesis \cite{Keller:2000:MCE,Mammoto:2010:MCT,Rieu:2009:MDS}. 
 In \cite{Dallon:2013:FBM} we introduced a differential equation model for cell motion which suggests that cell speed is independent of force. A possible explanation for that predicted behavior is provided by the saltatory nature of cell adhesion and thus motion, an idea also suggested by numerical simulations. Moreover, data suggest that there is little or no functional dependence of speed on cell force \cite{Dallon:2013:CSI}. Typically, biologists determine cell speed by averaging cell displacement measured on the order of minutes. The implication is that this type of average is largely independent of cell force and highly dependent on the adhesion dynamics.
 
 The present model is force based and focuses on the random nature of integrin based adhesion sites. We do not model the molecular processes involved in adhesion. Rather, our purpose is to link the statistical properties of the dynamics of the adhesion process to overall cell motion. Connecting the two types of models is a challenging goal for the future. 
The insight and the heart of the model is this conjecture that speed is highly dependent on adhesion dynamics. To better understand the model, our eventual aim is to rigorously prove the conjecture. In order to do this, we simplified the model in two steps. First, a centroid model was devised which is a limiting case of the differential equation model as the forces get large. The next simplification was to consider a discrete-time centroid model. Our analysis started with this discrete-time model in \cite{Dallon:2013:CSI}. This discrete-time centroid model was analyzed using Markov chain theory. In this paper we add time back to the model returning to the first simplification. Here, we construct a complete continuous time centroid model that parallels the differential-equation model in the sense noted. For convenience, we refer to this new model as the CTCM, for Continuous-Time Centroid Model.

While the CTCM no longer involves differential equations, we prove that it is well-posed; \textit{i.e.}, mathematically coherent.  
In particular, we prove that it corresponds to a pure jump-type continuous-time Markov process with a general (uncountable) state space.    
In order to demonstrate this, it is necessary to carefully construct (and validate) an appropriate transition kernel that encapsulates the instantaneous evolution law.
In the terminology of Kallenberg \cite{Kallenberg}, this is a \textit{rate kernel}, which is most easily constructible as the product of a probability kernel, known as the \textit{jump transition kernel}, and a real-valued function, known as the \textit{rate function}, defined on the state space.

With the CTCM on firm mathematical footing, we then proceed to rigorously analyze its long-time behavior.  
It is anticipated that the CTCM itself will not approach a limiting configuration (even probabilistically) because of the potential for cells to drift.  
Because of this (and the complexity) of the CTCM, we project its state space onto a finite set and examine the finite-state Markov process that is induced by this projection.  
A calculation will produce a steady-state distribution for this finite-state process, general theory will show that this distribution is attracting, and a version of Dynkin's criterion will establish the existence of ``pullback'' distributions for the CTCM that exhibit a sort of partial invariance.

The configuration for the CTCM after sufficient time has elapsed should be well-approximated by one of these pullback distributions.
Thus, to capture typical long-time behavior, we run the CTCM with such a pullback distribution as initial data.
We will prove that when we do so the expected location of the cell center is a continuous function of time.  
A formula for the expected velocity of the center can be written down, and we give a rigorous proof of the correctness of this formula.
This formula will be vital to proving the primacy of adhesion dynamics in cell motion in the full differential equation model.


We begin in Section~\ref{sec:demodel} by reviewing the differential equation model introduced in~\cite{Dallon:2013:FBM}.
The definition, justification, and well-posedness of the CTCM follow in Section~\ref{sec:wellposedness}.
In Section~\ref{sec:ltbehavior}, we analyze the long-time behavior of the CTCM.
%
%
In Section~\ref{sec:num}, we compare numerical results for the expected velocity of the CTCM to the theoretical results given by Theorem 2. In this section we also show numerical results for wait time distributions that are not exponential.  In Section~\ref{sec:discussion}, we conclude with a discussion that summarizes our results and suggests future applications of our work.

\section{Differential Equation Model}\label{sec:demodel}
The cell is modeled as a nucleus and multiple interaction sites which
exert forces on the nucleus as shown in Figure~\ref{fig:cell}.  These
interaction sites are integrin based adhesion sites (I-sites)
\cite{Friedl:2009:CCM,Ulrich:2009:TCM,Gumbiner:1996:CAM}.  I-sites
attach to an external substrate and once attached remain fixed to that
substrate location.  The duration of the attachment is determined by a
given probability distribution.  The same is true for the time the I-site
remains unattached, although the distributions need not be the same.
%
The model assumes the I-sites exert forces on the nucleus according to
Hooke's law; that is, the force is proportional to distance.  Let $\alpha_i$ denote the spring constant for the $i$th adhesion site.  Thus it is
as if the I-sites are attached to the cell center with springs which
have a rest length assumed to be zero.  Moreover there is a drag force on the
cell nucleus which is modeled assuming the center (nucleus) is a
sphere in a liquid with low Reynolds number and is proportional to
the velocity, denoted by $C$.  Denote the
location of the cell center as ${\bf x}$, a point in $\mathbb{R}^N$.
Likewise the location of each I-site $\u_{i}$ are points in
$\mathbb{R}^N$, where $i$ ranges from 1 to $n$. The random variable $\psi_i$ indicates whether the $i$th I-site is attached or detached.    Due to low Reynolds
number the acceleration term can be ignored and the equations of
motion are first order \cite{Dallon:2004:HCM}.  These equations are
\begin{equation}
C {\bf x}^{\prime} =\sum_{i=1}^n
-\alpha_i({\bf
  x}-\u_i) 
\psi_i(t),
\label{equ:ode}
\end{equation}
where $\u_i$ is given by 
\begin{equation}
\label{equ:smi}
\u_i(t) = \lim_{y \nearrow a_{p,i}}{\bf x}(y)+{\bf b}^{p,i}\mbox{ for } a_{p,i} \leq
t<a_{p+1,i} ,
\end{equation}
for each $i$ the sequence $\{a_{p,i}\}$ of random variables are the times when $\psi_i$ makes the
transition from 0 to 1, and $\{d_{p,i}\}$ is the sequence of random variables of the times when $\psi_i$ makes the
transition from 1 to 0. Of course, the two sequences are not independent since $a_{p,i}<d_{p^*,i}<a_{p+1,i}$ where $p^*=p$ if the initial state starts with the $i$th I-site attached and $p^*=p+1$ if it starts out detached. The vectors ${\bf b}^{p,i}$ are independent, identically distributed random
vectors with a distribution $\eta$. 
Although the equations of motion are independent of the location of
the I-site when it is detached, for convenience we define the location
to remain the same until it reattaches.
\begin{figure}[h]
  \centerline{\includegraphics*[width=.29\textwidth]{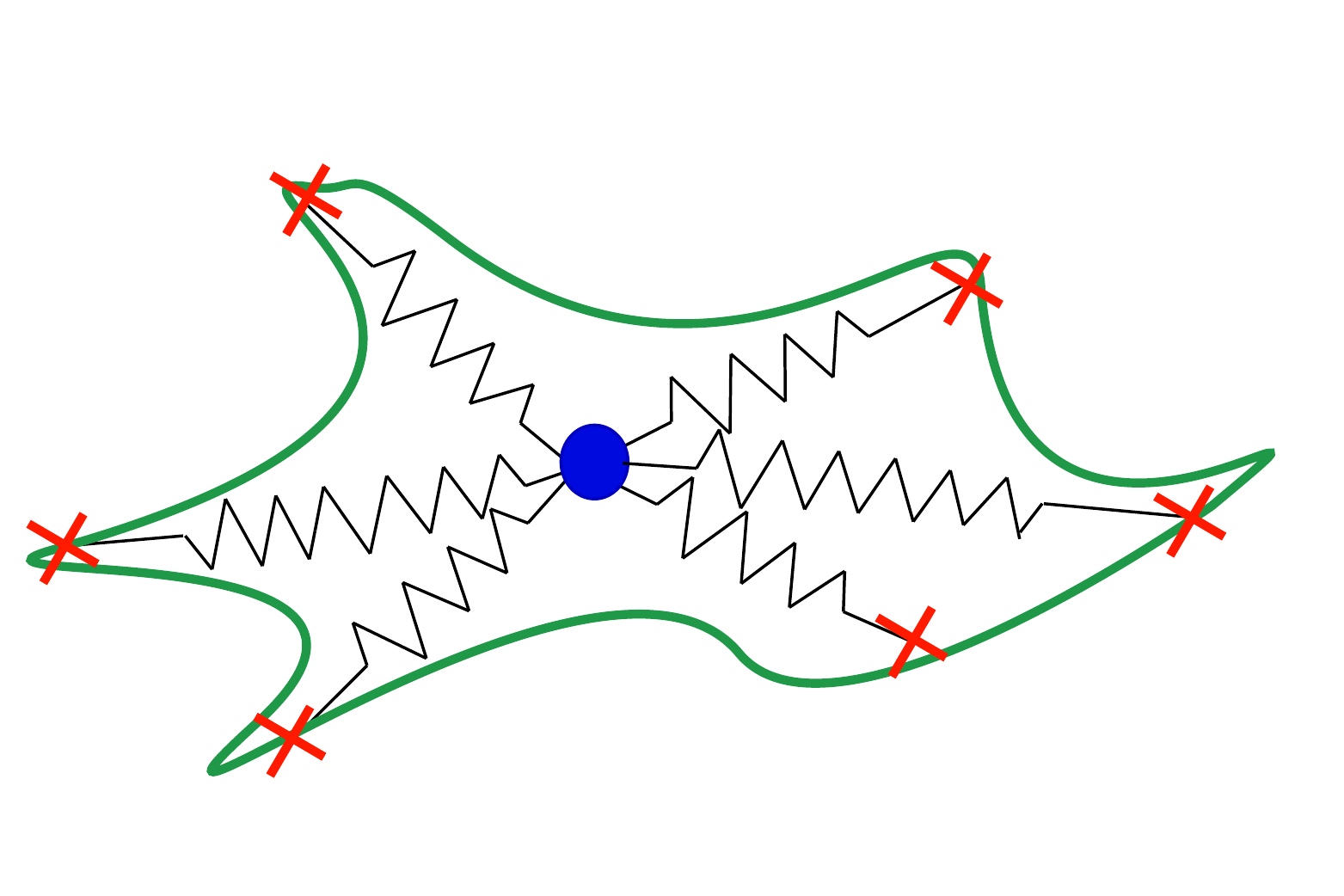}}
  \caption{This figure depicts the way a cell is modeled
    mathematically.  The cell is a center location (nucleus) with
    attached springs.  The other end of the springs are attached to
    I-sites which can interact with the substrate depicted
    by ``x''.}
\label{fig:cell}
\end{figure} 

\section{The CTCM and its Well-posedness}\label{sec:wellposedness}

Consider the model obtained when the following symmetrizing and limiting transformations are performed on \eqref{equ:ode} and \eqref{equ:smi}:
\begin{itemize}
 \item the $\alpha_i$ are considered to be independent of $i$;
 \item $C$ is set to $0$;
 \item The interevent times for detachment, $d_{p^*,i}-a_{p,i}$, are taken to be independent, identically-distributed exponential random variables;
 \item The interevent times for attachment, $a_{p+1,i}-d_{p^*,i}$, are taken to be independent, identically-distributed exponential random variables.
\end{itemize}
We no longer have a system of differential equations but still seem to have a sensible evolution law.  We can abstract from the particular context of biological cell motion, to describe the situation from scratch as follows.
We have finitely many objects that move through a physical space.
At any given time, in addition to having a location, each object has a ``status'', either ``attached'' or ``detached''.  
At random, and independently of one another, the objects change status.  All attached objects detach at the same rate, 
and all detached objects attach at the same rate (possibly different than the detachment rate).  The expected wait time
for a given object to change status depends only on its current status, not on how long it has had that status.
Any given object only changes location at the moment it attaches.  Its new location is a random perturbation from the 
centroid of the locations of the objects that were attached just before the given object attached.  
Whenever the only attached object detaches (leaving no objects attached), the ``centroid'' is considered to 
be the same as it was immediately before this detachment occurred.

To make sure that this described model is mathematically coherent, we will formally situate it in the theory of Markov processes on general state spaces.  Doing so will also allow us (in Section \ref{sec:ltbehavior}) to make use of the theorems of that theory to obtain rigorous results about centroid motion, rather than settling for heuristics.

As we go through the somewhat lengthy process of \textit{defining} the mathematical model, we will simultaneously \textit{interpret} its various formulas in order to argue that it really matches the informal description given above.
 
\subsection{General Notation}\label{notation}


In order to keep abuse of notation to a minimum, we distinguish between different types of Cartesian products, including one that is slightly more general than is typically used. 

Given two sets $B$ and $D$, we consider the product $B \times D$ to be a set of ordered pairs, but
we consider product of more than two sets to have functions as their elements.  (The motive for this is that it's easier to splice together explicitly indexed functions than positionally indexed tuples.)
Let $B$ be a set, $D$ be a finite set, and $P$ be a partition of $D$.  As usual, we define $B^D$ to be the set of functions from $D$ to $B$.  Thinking of functions as sets of ordered pairs, we note that if $f_p$ is an element of the set $B^p$ of functions from $p$ to $B$ for every $p \in P$, then $\bigcup_{p \in P} f_p \in B^D$.  If, for each $p \in P$, $B_p \subseteq B^p$, then we define the product set
\[
 \bigtimes_{p \in P} B_p := \left\{\bigcup_{p \in P} f_p : \text{$f_p \in B_p$ for every $p \in P$}\right\}.
\]

Suppose additionally that $B$ is a topological space.  If, for each $p \in P$, $\lambda_p$ is a Borel measure on $B^p$, then we define the product measure $\bigtimes_{p \in P}\lambda_p$ to be the unique Borel measure on $B^D$ satisfying
\[
 \left(\bigtimes_{p \in P}\lambda_p \right)\left(\bigtimes_{p \in P} B_p \right) = \prod_{p \in P} \lambda_p(B_p)
\]
for every choice of $B_p \subseteq B^p$.  (While the notation here may be unconventional, the existence of this product measure is equivalent to a standard result of measure theory.  See, \textit{e.g.}, Section 1.6 in \cite{Cinlar}.)

Besides Cartesian products, we will use the following notation frequently:
\begin{itemize}
 \item $\mathcal{P}(X)$ denotes the power set of a set $X$;
 \item $\mathcal{B}(X)$ denotes the Borel $\sigma$-algebra of a topological space $X$;
 \item $\delta_x$ denotes the standard point mass measure concentrated at a point $x$;
 \item $\mathbf{1}_B$ denotes the indicator function of a set $B$ (so $\mathbf{1}_B(x) = \delta_x(B)$);
 \item $f|_B$ denotes the restriction of a function $f$ to a set $B$;
 \item $[i]$ denotes the von Neumann ordinal $\{0,1,\ldots, i-1\}$ of a positive integer $i$;
 \item $i^-$ and $i^+$ denote, respectively, the immediate predecessor and successor of an integer $i$.
\end{itemize}

\subsection{Model Parameters}

The physical system will be described by the following 5 quantities:

\begin{itemize}
 \item a positive constant $\theta_a$, representing the rate at which objects tend to attach;
 \item a positive constant $\theta_d$, representing the rate at which objects tend to detach;
 \item a positive integer $n$, representing the number of objects;
 \item a positive integer $N$, representing the dimension of the Euclidean space in which the objects move;
 \item a probability measure $\eta$ on $\mathcal{B}(\mathbb{R}^N)$, representing the distribution of the perturbation of newly-attached objects from the centroid.
\end{itemize}

For convenience, we define $E := \mathbb{R}^N$, and $\overline{\eta} := \int_E \mathbf{x}\,d\eta(\mathbf{x})$, which we require to be well-defined and finite.  Note that the wait time for a given detached object to attach is exponentially-distributed with parameter $\theta_a$, and the wait time for a given attached object to detach is exponentially-distributed with parameter $\theta_d$.

\subsection{State Spaces}

The mathematical space describing the location of the $n$ objects at a fixed time is $nN$-dimensional, but the important role played by the centroid of attached objects means that it should probably be explicitly tracked as well (even though in most cases that position can be deduced from the positions of the attached objects).  Additionally, the attached/detached status of each object needs to be tracked.  Thus, the primary state space we use will be
\[
\mathsf{X} := \left\{(\psi,\mathbf{v}) \in \{0,1\}^{[n]} \times E^{[n^+]} : \sum_{i\in [n]} \psi(i)(\mathbf{v}(i)-\mathbf{v}(n))=0\right\}.
\]
The $n$ objects are numbered from $0$ to $n^-$.  For a point $(\psi,\mathbf{v}) \in \mathsf{X}$, $\psi(i)$ represents the attached ($\psi(i)=1$) or detached ($\psi(i)=0$) status of the $i$th object, $\mathbf{v}(i)$ represents the location of the $i$th object, and $\mathbf{v}(n)$ represents the centroid of the attached objects.

We endow $\{0,1\}$ with the discrete topology, $E$ with the Euclidean topology, the product $\{0,1\}^{[n]} \times E^{[n^+]}$ with the corresponding product topology, and the subset $\mathsf{X}$ with the corresponding subset topology. 

In some situations, it will be useful to work with the (much simpler) secondary state space $\hat{\mathsf{X}} := [n^+]$, which represents the number of attached objects.
Given $\psi \in \{0,1\}^{[n]}$, define $|\psi| := \sum_{i \in [n]} \psi(i)$.  The projection $\pi: \mathsf{X} \rightarrow \hat{\mathsf{X}}$ given by the formula $\pi(\psi,\mathbf{v}) := |\psi|$ provides the natural connection between the two state spaces.

\subsection{Kernels}\label{notanddef}

Here we give a formula for a mathematical object $\alpha$ that we will argue corresponds to the CTCM's instantaneous evolution law on $\mathsf{X}$.  In the two subsequent subsections of Section 3, we will prove that $\alpha$ is a rate kernel, and that it generates a Markov process.  The rate kernel $\alpha$ will be the product of a rate function $c$ and a jump transition kernel $\mu$.  On the finite state space $\hat{\mathsf{X}}$, we will define $\hat{\alpha}$, $\hat{c}$, and $\hat{\mu}$, correspondingly. 

\begin{itemize}

 \item Given $\mathbf{a},\mathbf{b} \in E$ and $a, b \in \mathbb{R}$, define the scale-and-translate function $S_{(\mathbf{a},\mathbf{b},a,b)}: E \times E \rightarrow E \times E$ by the formula $S_{(\mathbf{a},\mathbf{b},a,b)}(\mathbf{x},\mathbf{y}) := (a(\mathbf{x}-\mathbf{a}),b(\mathbf{y}-\mathbf{b}))$.
 \item For each $i \in [n]$:
 \begin{itemize}

\item Define $r_i: \{0,1\}^{[n]} \rightarrow (0,1]$ by the formula
\[
 r_i(\psi) := \frac{\theta_d\psi(i)+\theta_a(1-\psi(i))}{\theta_d|\psi|+\theta_a(n-|\psi|)}.
\]
It is a direct consequence of the definition of exponential random variables that the minimum of $n$ independent exponential random variables is itself exponentially-distributed with a parameter that is the sum of the parameters of the $n$ independent variables.  Thus, the wait time for a system whose combined status is represented by $\psi$ to undergo a change of status is exponentially distributed with parameter $\theta_d|\psi|+\theta_a(n-|\psi|)$.  A straightforward calculation shows that $r_i(\psi)$ represents the probability that that next change of status involves the $i$th object.

\item Define $s_i: \{0,1\}^{[n]} \rightarrow \{0,1\}^{[n]}$ so that $s_i(\psi)$ disagrees with $\psi$ precisely on $\{i\}$; \textit{i.e.},  by the formula
$s_i(\psi) := (\psi \setminus \{(i,\psi(i))\}) \cup \{(i,1-\psi(i))\}$.  Whenever the $i$th object changes status, the combined status of the objects goes from $\psi$ to $s_i(\psi)$.

\item Define $P_i$ to be the partition of $[n^+]$ consisting of singletons except for the part $\{i,n\}$; \textit{i.e.}, $P_i :=  \{\{j\}: j \in [n] \setminus \{i\}\} \cup \{\{i,n\}\}$.  The elements of $\{i,n\}$ index the locations that may change when the $i$th object changes status.

\item Define $F_i: E \times E \rightarrow E^{\{i,n\}}$ by the formula $F_i(\mathbf{x},\mathbf{y}) := \{(i,\mathbf{x}),(n,\mathbf{y})\}$, and define $G_i: E \rightarrow E^{\{i\}}$ by the formula $G_i(\mathbf{x}) := \{(i,\mathbf{x})\}$.  $F_i$ and $G_i$ are used to index tuples.

\item Given $(\psi,\mathbf{v}) \in \{0,1\}^{[n]} \times E^{[n^+]}$, define the measure $\mu_{\{i\}}^{(\psi,\mathbf{v})}$ on $E^{\{i\}}$ by the formula $\mu_{\{i\}}^{(\psi,\mathbf{v})} := \delta_{\mathbf{v}(i)} \circ G_i^{-1}$.    
This formula reflects the fact that the $i$th object doesn't move when some other object changes status.

\item Define the measure $\mu_{\{i,n\}}^{(\psi,\mathbf{v})} :=$
\[
  \begin{cases}
 (\delta_{\mathbf{v}(i)} \times \delta_{\mathbf{v}(n)}) \circ F_i^{-1}, & \text{if $|\psi|=\psi(i) = 1$}\\
 (\delta_{\mathbf{v}(i)} \times \delta_{\mathbf{v}(n)}) \circ S_{(\mathbf{0},(\mathbf{v}(i)-\mathbf{v}(n))/|\psi|^-,1,1)}^{-1}\circ F_i^{-1}, & \text{if $|\psi|>\psi(i) = 1$}\\
(\eta \times I) \circ S_{(-\mathbf{v}(n),-|\psi|^+\mathbf{v}(n),1,1/|\psi|^+)}^{-1} \circ F_i^{-1}, & \text{if $\psi(i) = 0$}\\
\end{cases}
\]
on $E^{\{i,n\}}$, where $I: E \times \mathcal{B}(E) \rightarrow [0,1]$ is the inclusion kernel defined by the formula $I(\mathbf{x},C)=\delta_\mathbf{x}(C)$, so $(\eta \times I)(B \times C) = \int_B \eta(d\mathbf{x})I(\mathbf{x},C)=\int_{B \cap C} \eta(d\mathbf{x})=\eta(B \cap C)$.  The formula for $\mu_{\{i,n\}}^{(\psi,\mathbf{v})}$ reflects the various ways that the centroid and the location of the $i$th object can change when that object changes status.  If the given object starts as the only attached one, neither its location nor the ``centroid'' changes when the object detaches.  If the given object starts as one of two or more attached objects, upon its detachment the centroid relocates to correspond to the reduced collection of attached objects, but the location of the given object doesn't change.  If the given object starts as detached, its possible locations upon attachment (and their various likelihoods) are perturbations from the old centroid, as specified by $\eta$; the centroid itself changes to account for the enlarged collection of attached objects.
\end{itemize}

\item Define $\tilde{\mu}: \mathsf{X} \times \mathcal{B}(\{0,1\}^{[n]} \times E^{[n^+]}) \rightarrow [0,\infty)$ by the formula 
\[
\tilde{\mu}((\psi,\mathbf{v}),\cdot) := \sum_{i \in [n]} r_i(\psi) \left( \delta_{s_i(\psi)} \times \bigtimes_{p \in P_i} \mu_p^{(\psi,\mathbf{v})}\right).
\]
The $i$th term in parentheses represents the probability that if the $i$th object is the first to change status, then the new configuration of the system is in the given set.

\item Define $\mu: \mathsf{X} \times \mathcal{B}(\mathsf{X}) \rightarrow [0,\infty)$ to be the restriction of $\tilde{\mu}$ to $\mathsf{X} \times \mathcal{B}(\mathsf{X})$.  Given a starting configuration $\mathsf{x}$, $\mu(\mathsf{x},B)$ represents the probability that the configuration after the next attachment/detachment event will be in $B$.

\item Define $c: \mathsf{X} \rightarrow (0,\infty)$ by the formula
 $c(\psi,\mathbf{v}) :=  \theta_d|\psi| + \theta_a (n-|\psi|)$.  Given a starting configuration $\mathsf{x}$, $c(\mathsf{x})$ represents the reciprocal of the expected wait time until the next attachment/detachment event.  For later convenience, we define $\theta := n\max\{\theta_a,\theta_d\}$, which is an upper bound for $c$.

\item Define $\alpha : \mathsf{X} \times \mathcal{B}(\mathsf{X}) \rightarrow [0,\infty)$ by the formula $\alpha(\mathsf{x},B) := c(\mathsf{x})\mu(\mathsf{x},B)$.

\item Define $\hat{\mu}: \hat{\mathsf{X}} \times \mathcal{P}(\hat{\mathsf{X}})\rightarrow [0,\infty)$ by the formula
\[
 \hat{\mu}(i,\cdot) := \frac{\theta_d i \delta_{i^-} + \theta_a (n-i)\delta_{i^+}}{\theta_di + \theta_a (n-i)}.
\]

\item Define $\hat{c}: \hat{\mathsf{X}} \rightarrow (0,\infty)$ by the formula $\hat{c}(i) := \theta_di + \theta_a (n-i)$.

\item Define $\hat{\alpha} : \hat{\mathsf{X}} \times \mathcal{P}(\hat{\mathsf{X}}) \rightarrow [0,\infty)$ by the formula
 $\hat{\alpha}(i,B) = \hat{c}(i)\hat{\mu}(i,B)$.

\end{itemize}

\subsection{The Jump Transition Kernel}\label{results}

Here we verify that $\mu$ is a probability kernel, which will make $\alpha$ a transition kernel, as desired.  (A probability kernel from $\mathsf{X}$ to $\mathsf{X}$ is a real-valued function $K$ on $\mathsf{X} \times \mathcal{B}(\mathsf{X})$ such that $K(\cdot,B)$ is a measurable function for every $B \in \mathcal{B}(\mathsf{X})$ and such that $K(\mathsf{x},\cdot)$ is a probability measure for every $\mathsf{x} \in \mathsf{X}$.)  In preparation for this verification, we define
\[
 D_i(\psi,\mathbf{v}) := \begin{cases}
 \displaystyle \left(s_i(\psi), \mathbf{v}|_{[n]} \cup \left\{\left(n,\mathbf{v}(n)-\frac{\mathbf{v}(i)-\mathbf{v}(n)}{|\psi|^-}\right)\right\}\right) & \text{if $|\psi| > 1$}\\
 (s_i(\psi),\mathbf{v}) & \text{if $|\psi| \leq 1$,}
 \end{cases}
\]
and
\[
 A_i((\psi,\mathbf{v}),\mathbf{x}) := \left(s_i(\psi),\mathbf{v}|_{[n]\setminus\{i\}} \cup 
 \left\{(i,\mathbf{x}+\mathbf{v}(n)),\left(n,\frac{\mathbf{x}}{|\psi|^+}+\mathbf{v}(n)\right)\right\}\right),
\]
for each $i \in [n]$.  A routine calculation shows that these formulas define functions $D_i: \mathsf{X} \rightarrow \mathsf{X}$ and $A_i: \mathsf{X} \times E \rightarrow \mathsf{X}$.  The informal description of the CTCM suggests that $D_i(\mathsf{x})$ should represent the state of the system immediately after a system in state $\mathsf{x}$ undergoes detachment of the $i$th object, and $A_i(\mathsf{x},\mathbf{x})$ should represent the new state of the system immediately after a system in state $\mathsf{x}$ undergoes attachment of the $i$th object with perturbation $\mathbf{x}$.  These functions come up naturally in the analysis of the formal version of the CTCM here and again in Subsection \ref{integrals}.

\begin{proposition}\label{prop1}The map $\mu$ is a probability kernel from $\mathsf{X}$ to $\mathsf{X}$.\end{proposition}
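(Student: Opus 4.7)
The plan is to verify the two defining properties separately: Borel measurability of $\mathsf{x}\mapsto\mu(\mathsf{x},B)$ for every $B\in\mathcal{B}(\mathsf{X})$, and the requirement that $\mu(\mathsf{x},\cdot)$ be a probability measure on $\mathcal{B}(\mathsf{X})$ for every $\mathsf{x}\in\mathsf{X}$. Because $\mu$ is the restriction of $\tilde{\mu}$, whose codomain of integration is the strictly larger space $\{0,1\}^{[n]}\times E^{[n^+]}$, the latter requirement splits into two subclaims: that the total mass of $\tilde{\mu}((\psi,\mathbf{v}),\cdot)$ equals $1$, and that $\tilde{\mu}((\psi,\mathbf{v}),\cdot)$ places no mass outside $\mathsf{X}$, i.e., is supported on configurations satisfying the codimension-$N$ centroid constraint. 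The support claim is the crux; everything else is routine bookkeeping.

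To attack the support claim I would first rewrite each summand of $\tilde{\mu}((\psi,\mathbf{v}),\cdot)$ using the auxiliary maps $D_i$ and $A_i$ defined just before the proposition. Unwinding the three cases of $\mu_{\{i,n\}}^{(\psi,\mathbf{v})}$ together with the accompanying Dirac factors $\delta_{s_i(\psi)}$ and $\mu_{\{j\}}^{(\psi,\mathbf{v})}=\delta_{\mathbf{v}(j)}\circ G_j^{-1}$, the $i$th summand collapses to $r_i(\psi)\delta_{D_i(\psi,\mathbf{v})}$ when $\psi(i)=1$ and to $r_i(\psi)\bigl(\eta\circ A_i((\psi,\mathbf{v}),\cdot)^{-1}\bigr)$ when $\psi(i)=0$. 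Granting the asserted fact that $D_i$ maps $\mathsf{X}$ into $\mathsf{X}$ and that $A_i$ maps $\mathsf{X}\times E$ into $\mathsf{X}$, this representation immediately yields the support claim. To actually justify those range statements I would check the two key algebraic identities: in the detachment case with $|\psi|>1$, the hypothesis $\sum_j\psi(j)(\mathbf{v}(j)-\mathbf{v}(n))=0$ forces $\mathbf{v}(n)-(\mathbf{v}(i)-\mathbf{v}(n))/|\psi|^-$ to coincide with the mean of the remaining attached positions; in the attachment case, a short computation shows that the contribution $|\psi|\mathbf{x}/|\psi|^+$ from the shifted centroid exactly cancels the net contribution $-|\psi|\mathbf{x}/|\psi|^+$ arising from the unchanged attached positions, so the new constraint holds identically in the perturbation $\mathbf{x}$.

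The remaining pieces are quick. For total mass equal to $1$, each factor appearing in $\bigtimes_{p\in P_i}\mu_p^{(\psi,\mathbf{v})}$ is the pushforward of a probability measure (a point mass, a product of point masses, or $\eta\times I$, which has total mass $\eta(E)=1$), so each summand of $\tilde{\mu}((\psi,\mathbf{v}),\cdot)$ has total mass $r_i(\psi)$, and $\sum_{i\in[n]}r_i(\psi)=1$ by direct inspection of the definition of $r_i$. For measurability, the finiteness of $\{0,1\}^{[n]}$ reduces the task to fixing $\psi$ and checking that $\mathbf{v}\mapsto\mu((\psi,\mathbf{v}),B)$ is Borel; detachment summands amount to $\mathbf{1}_B\circ D_i(\psi,\cdot)$, which is measurable because $D_i(\psi,\cdot)$ is continuous, while attachment summands take the form $\mathbf{v}\mapsto\int_E\mathbf{1}_B(A_i((\psi,\mathbf{v}),\mathbf{x}))\,d\eta(\mathbf{x})$, which is measurable in $\mathbf{v}$ by Fubini--Tonelli applied to the jointly continuous map $A_i$. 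The only step I expect to require genuine care is the algebraic cancellation that establishes the centroid constraint in the attachment case.
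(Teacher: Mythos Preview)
Your proposal is correct and follows essentially the same route as the paper: identify the $i$th summand of $\tilde{\mu}((\psi,\mathbf{v}),\cdot)$ with $\delta_{D_i(\psi,\mathbf{v})}$ (when $\psi(i)=1$) or with the pushforward $\eta\circ A_i((\psi,\mathbf{v}),\cdot)^{-1}$ (when $\psi(i)=0$), then invoke the asserted fact that $D_i$ and $A_i$ land in $\mathsf{X}$ for the support claim and $\sum_i r_i(\psi)=1$ for total mass. The one noteworthy difference is the measurability argument: the paper builds $\mu$ up as a composite of kernels and cites Kallenberg's Lemma~1.41 (products of kernels are kernels), whereas you fix $\psi$ and argue directly via continuity of $D_i(\psi,\cdot)$ and Fubini--Tonelli applied to the jointly continuous $A_i$. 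Your route is more self-contained and reuses the $D_i$/$A_i$ representation already needed for the support check; the paper's route is quicker once the cited lemma is granted but requires verifying separately that each factor $\mu_p^{(\psi,\mathbf{v})}$ defines a kernel in $(\psi,\mathbf{v})$.
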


\begin{proof}
 Since $s_i$ is measurable, $((\psi,\mathbf{v}),B) \mapsto \delta_{s_i(\psi)}(B) = (\mathbf{1}_B \circ s_i)(\psi)$ is a kernel from $\mathsf{X}$ to $\{0,1\}^{[n]}$ for each $i \in [n]$.  Lemma 1.41 of \cite{Kallenberg} implies that finite products of kernels, with or without integration of the parameters, are kernels.  A similar analysis, combined with the fact that translations and dilations of Borel subsets of Euclidean space are Borel subsets of Euclidean space, implies that $(\mathsf{x},B) \mapsto \mu_p^\mathsf{x}(B)$ is a kernel from $\mathsf{X}$ to $E^p$ for every $i \in [n]$ and every $p \in P_i$.  Two more applications of that lemma imply that
 \[
  ((\psi,\mathbf{v}),B) \mapsto \left(\delta_{s_i(\psi)} \times \bigtimes_{p \in P_i} \mu_p^{(\psi,\mathbf{v})}\right)(B)
 \]
 is a kernel from $\mathsf{X}$ to $\{0,1\}^{[n]} \times E^{[n^+]}$.  Since $r_i$ is nonnegative and measurable, and sums of kernels are kernels (see \cite{Kallenberg}), and the restriction of a measure to the measurable subsets of a fixed measurable set is a measure, we can conclude that $\mu$ is a kernel from $\mathsf{X}$ to $\mathsf{X}$.
 
 It remains to show that $\mu((\psi,\mathbf{v}),\mathsf{X})=1$ for every $(\psi,\mathbf{v}) \in \mathsf{X}$.  Fix such $(\psi,\mathbf{v})$, fix $i \in [n]$, and let $\lambda := \delta_{s_i(\psi)} \times \bigtimes_{p \in P_i} \mu_p^{(\psi,\mathbf{v})}$.  
If $\psi(i)=1$, then $\lambda = \delta_{D_i(\psi,\mathsf{v})}$, so $\lambda$ is a probability measure on $\mathsf{X}$. 
If $\psi(i)=0$, note that $\eta \times I$ is a probability measure concentrated on the diagonal in $E \times E$, which implies that  $\mu^{(\psi,\mathbf{v})}_{\{i,n\}}$ is a probability measure concentrated on the set
 \[
  B :=\{ \{(i,\mathbf{x}),(n,(|\psi|\mathbf{v}(n)+\mathbf{x})/|\psi|^+)\}:\mathbf{x}\in E\}.
 \]
 The measure $\lambda$ is the product of $\mu^{(\psi,\mathbf{v})}_{\{i,n\}}$ and point mass measures and is therefore a probability measure on $\{0,1\}^{[n]} \times E^{[n^+]}$.  The information on where $\lambda$'s component measures are concentrated tells us that $\lambda$ itself is concentrated on the set
\[
 \{(s_i(\psi),\mathbf{v}|_{[n]\setminus \{i\}}\cup \mathbf{w}) \in \{0,1\}^{[n]} \times E^{[n^+]}: \mathbf{w}\in B\} = \{A_i((\psi,\mathbf{v}),\mathbf{x}): \mathbf{x} \in E\},
\]
which is a subset of $\mathsf{X}$, the codomain of $A_i$, so $\lambda$ is actually a probability measure on $\mathsf{X}$.

In both cases, $\lambda$ is a probability measure on $\mathsf{X}$.  Letting $i$ (and therefore $\lambda$) vary, we see (since $\sum_{i \in [n]} r_i(\psi) = 1$) that $\mu((\psi,\mathbf{v}),\cdot)$ is a convex combination of probability measures on $\mathsf{X}$ and is therefore itself a probability measure on $\mathsf{X}$. 
\end{proof}

\subsection{Existence of the Markov Process}\label{process}

Using results from the theory of Markov processes, we can now show that $\alpha$ generates one and can give a partial description of its structure.  This essentially demonstrates that the CTCM is well-posed.

\begin{proposition}\label{prop2} For any Borel probability measure $\rho$ on $\mathsf{X}$, there is a discrete-time Markov process $Y$ on $\mathsf{X}$ with transition kernel $\mu$ such that $Y_0$ is $\rho$-distributed.\end{proposition}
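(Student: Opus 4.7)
The plan is to build the process by the standard Ionescu--Tulcea construction, invoking a result from Kallenberg to minimize bookkeeping. Because Proposition \ref{prop1} establishes that $\mu$ is a probability kernel from $\mathsf{X}$ to itself, the sequence of kernels $\mu_k := \mu$ (for $k \geq 1$), together with the initial distribution $\rho$, gives exactly the data needed to define a consistent family of finite-dimensional distributions on products of $\mathsf{X}$ with itself. The measurability of $\mathsf{X}$ as a Borel subset of $\{0,1\}^{[n]} \times E^{[n^+]}$, equipped with its subset Borel $\sigma$-algebra, makes $\mathsf{X}$ a standard Borel space, which is the typical setting for such constructions.

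First, I would form the infinite product space $\mathsf{X}^{\mathbb{N}}$ with its product $\sigma$-algebra, and apply the Ionescu--Tulcea theorem (for example, Theorem 6.17 in \cite{Kallenberg}) to produce a unique probability measure $\mathbb{P}_\rho$ on $\mathsf{X}^{\mathbb{N}}$ whose finite-dimensional marginals are given by
\[
 \mathbb{P}_\rho(Y_0 \in B_0,\dots,Y_k \in B_k) = \int_{B_0}\!\!\rho(d\mathsf{x}_0)\int_{B_1}\!\!\mu(\mathsf{x}_0,d\mathsf{x}_1)\cdots\int_{B_k}\!\!\mu(\mathsf{x}_{k-1},d\mathsf{x}_k),
\]
with $Y_k$ the coordinate projection onto the $k$th factor. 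Then I would verify (from the defining formula above) that $Y_0$ has law $\rho$ and that for every $B \in \mathcal{B}(\mathsf{X})$ and every $k$, one has $\mathbb{P}_\rho(Y_{k+1} \in B \mid Y_0,\dots,Y_k) = \mu(Y_k,B)$ almost surely, which is the defining Markov property together with the identification of $\mu$ as the one-step transition kernel.

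I expect the only real technical point to be confirming that $\mathsf{X}$ (as defined in Section \ref{notation}) is Borel isomorphic to a Polish space so that the Ionescu--Tulcea machinery applies cleanly; this follows since $\mathsf{X}$ is the intersection of the Polish space $\{0,1\}^{[n]} \times E^{[n^+]}$ with the closed set carved out by the linear constraint $\sum_{i \in [n]} \psi(i)(\mathbf{v}(i)-\mathbf{v}(n))=0$ on each slice of fixed $\psi$, making $\mathsf{X}$ itself Polish in its subset topology. With that in hand, everything else is formal and the proof reduces to quoting the existence theorem and reading off the Markov property from the integral representation of the joint law.
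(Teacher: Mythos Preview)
Your proposal is correct and takes essentially the same approach as the paper: once Proposition~\ref{prop1} establishes that $\mu$ is a probability kernel, the existence of $Y$ is a direct application of a standard construction theorem, and the paper's entire proof is a one-line citation of Theorem~3.4.1 in \cite{MeynTweedie}. Incidentally, the Ionescu--Tulcea theorem you invoke works on arbitrary measurable spaces, so your check that $\mathsf{X}$ is Polish, while correct, is not actually needed for your argument.
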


\begin{proof}
 Since \cref{prop1} shows that $\mu$ is a probability kernel from $\mathsf{X}$ to $\mathsf{X}$, this follows from Theorem 3.4.1 in \cite{MeynTweedie}. 
\end{proof}

\begin{proposition}\label{prop3}  For every Borel probability measure $\rho$ on $\mathsf{X}$, there is a pure jump-type continuous-time Markov process $X$ on $\mathsf{X}$ with rate kernel $\alpha$ such that $X_0$ is $\rho$-distributed.  If $Y$ is as defined in \cref{prop2}, $(\gamma_i)$ is a sequence of standard exponential random variables, and $\{Y,\gamma_1,\gamma_2,\gamma_3,\ldots\}$ is an independent family, then $X$ can be defined by the formula $X_t = Y_k$ for $t \in [\tau_k,\tau_{k^+})$, where $\tau_k := \sum_{i=1}^k (\gamma_i/c(Y_{i^-}))$.\end{proposition}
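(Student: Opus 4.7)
The plan is to take the explicit formula in the statement as the \emph{definition} of a candidate process $X$, and then argue two things: that $X$ is almost surely well-defined on all of $[0,\infty)$ (i.e., no explosion), and that it genuinely is a pure jump-type continuous-time Markov process whose rate kernel is $\alpha$. To set this up, I would first enlarge the probability space carrying the discrete-time chain $Y$ from \cref{prop2} to also carry an iid sequence $(\gamma_i)$ of standard exponential random variables, with $\{Y,\gamma_1,\gamma_2,\ldots\}$ forming an independent family. The partial sums $\tau_k := \sum_{i=1}^k \gamma_i/c(Y_{i^-})$ are then unambiguously defined random variables.

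The first substantive step is to rule out explosion, i.e., to show $\tau_k \to \infty$ almost surely. The uniform upper bound $c \leq \theta = n\max\{\theta_a,\theta_d\}$ recorded in Subsection~\ref{notanddef} gives $\tau_k \geq \theta^{-1}\sum_{i=1}^k \gamma_i$, and the strong law of large numbers applied to the iid exponentials of mean $1$ forces the right-hand side to diverge almost surely. Consequently the formula $X_t := Y_k$ for $t \in [\tau_k,\tau_{k^+})$ defines a right-continuous, piecewise-constant process on $[0,\infty)$ with probability one, and $X_0 = Y_0$ is $\rho$-distributed by \cref{prop2}.

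Then I would verify that $X$ is a Markov process whose rate kernel is $\alpha = c\mu$. The cleanest route is to invoke the standard construction theorem for pure jump-type Markov processes (in the spirit of Chapter 12 of \cite{Kallenberg}). The structural hypothesis that theorem needs is that, conditioned on $Y_k = y$, the holding time $\tau_{k^+}-\tau_k = \gamma_{k^+}/c(y)$ is exponentially distributed with parameter $c(y)$ and is independent of the next state $Y_{k^+}$, which in turn is $\mu(y,\cdot)$-distributed. This is immediate from the scaling property of the exponential distribution, the independence of $\gamma_{k^+}$ from $\{Y,\gamma_1,\ldots,\gamma_k\}$, and the Markov property of $Y$ supplied by \cref{prop2}, together with \cref{prop1} which guarantees that $\mu$ really is a probability kernel.

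The main obstacle is packaging rather than new analysis: measurability was handled in \cref{prop1}, the uniform bound on $c$ is immediate from its definition, and non-explosion is a one-line strong-law argument. What takes the most care is simply aligning the present convention (rate kernel presented as the product $c\mu$ of a rate function and a jump transition kernel) with the hypotheses of whichever off-the-shelf existence theorem one cites, so that the conclusion about the initial distribution and the rate kernel can be read off directly.
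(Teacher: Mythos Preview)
Your proposal is correct and follows essentially the same route as the paper: build $X$ from $Y$ and an independent iid exponential sequence, use the uniform bound $c \leq \theta$ together with the strong law of large numbers to rule out explosion, and then invoke Kallenberg's construction theorem (the paper cites Theorem~12.18 specifically). The one ingredient the paper verifies explicitly that you only gesture at under ``aligning \ldots\ with the hypotheses'' is the no-self-jump condition $\alpha(\mathsf{x},\{\mathsf{x}\}) = 0$, which follows immediately from $s_i(\psi) \neq \psi$ and is required for the cited theorem.
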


\begin{proof}
 Since (by \cref{prop1}) $\mu$ is a kernel from $\mathsf{X}$ to $\mathsf{X}$, and since the formula for $c$ indicates that it is positive and measurable, $\alpha$ is also a kernel from $\mathsf{X}$ to $\mathsf{X}$.
 
For each $i \in [n]$, we have $s_i(\psi)(i) \neq \psi(i)$, so $\delta_{s_i(\psi)}(\{\psi\}) = 0$, which means that
\[
\alpha((\psi,\mathbf{v}),\{(\psi,\mathbf{v})\}) = c(\psi,\mathbf{v})\mu((\psi,\mathbf{v}),\{(\psi,\mathbf{v})\}) = 0
\]
for every $(\psi,\mathbf{v}) \in \mathsf{X}$.  

Given $\rho$, let $Y$ be as in \cref{prop2}.  Also, let $(\gamma_k)$ be a sequence of standard exponential random variables such that $\{Y,\gamma_1,\gamma_2,\gamma_3,\ldots\}$ is an independent family.  (That, without loss of generality, such a sequence can be assumed to exist is a consequence of the Ionescu Tulcea Theorem.  See, \textit{e.g.}, Corollary 6.18 in \cite{Kallenberg}.) 

Suppose that $\sum_k (\gamma_k/c(Y_{k^-}))$ converges.  The formula for $c$ guarantees that it has a bounded range, so $\sum_k \gamma_k$ converges; thus, its partial sums must be bounded.  This implies that the average term in a partial sum goes to $0$ as the index of the partial sum goes to $\infty$.  By The Strong Law of Large Numbers, this fails almost surely.  Hence, $\sum_k (\gamma_k/c(Y_{k^-})) = \infty$ almost surely.

Because of the preceding observations, Theorem 12.18 in \cite{Kallenberg} yields the desired result. 
 \end{proof}
 
 \section{Long-time Behavior of the CTCM}\label{sec:ltbehavior}
 
 
With the CTCM established as well-posed, we proceed to analyze aspects of its long-time behavior.  In Subsection \ref{subsecconnect} we establish a version of Dynkin's Criterion, and in Subsection \ref{fsprocess} we use it to show that projection $\pi$ induces a finite-state process that is naturally connected to the CTCM.  In Subsection \ref{ltbfsprocess}, we show that the finite-state process has an attracting steady-state.  In Subsection \ref{integrals}, we derive a formula for integrating with respect to $\mu$.  In Subsection \ref{secgrowth}, we derive some elementary evolutionary bounds for the CTCM.  In Subsection \ref{integrability}, we show that if the CTCM has an integrable initial distribution, its distribution remains integrable for all time.  In Subsection \ref{continuity}, we show that (while its sample paths are discontinuous), the CTCM's centroid has an expected value that varies continuously with time.  Finally, in Subsection \ref{velocity} we rigorously compute the velocity of the expected centroid location.



\subsection{Connecting the finite-state system and the CTCM}\label{subsecconnect}

The finite-state system corresponding to the auxiliary kernel $\hat{\alpha}$ plays a vital role in the analysis of the CTCM.  To connect these two systems carefully, we need a version of 
Dynkin's Criterion geared towards discrete-time Markov processes on topological spaces.  This is the content of the following lemma.

\begin{lemma}\label{Dynkin} Let $T_1$ and $T_2$ be topological spaces, let $f: T_1 \rightarrow T_2$ be a continuous surjection having a continuous right-inverse, let $Q_1: T_1 \times \mathcal{B}(T_1) \rightarrow [0,1]$ be a probability kernel, and let $Q_2: T_2 \times \mathcal{B}(T_2) \rightarrow [0,1]$ be a function satisfying $Q_1(x,f^{-1}(B))=Q_2(f(x),B)$ for every $x \in T_1$ and $B \in \mathcal{B}(T_2)$.

Then $Q_2$ is a probability kernel, and for every probability measure $\rho$ on $\mathcal{B}(T_1)$ and every discrete-time Markov process $Z$ with initial distribution $\rho$ and transition kernel $Q_1$, the discrete-time stochastic process $f \circ Z$ is a Markov process with initial distribution $\rho \circ f^{-1}$ and transition kernel $Q_2$.
\end{lemma}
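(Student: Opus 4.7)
The plan is to verify the two conclusions of the lemma in sequence: first that $Q_2$ is a probability kernel, and then that $f \circ Z$ is a Markov process with the stated initial distribution and transition kernel. Let $g: T_2 \to T_1$ be the given continuous right-inverse of $f$, so $f \circ g = \mathrm{id}_{T_2}$.

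For the first conclusion, I would fix $y \in T_2$ and check that $Q_2(y,\cdot)$ is a probability measure on $\mathcal{B}(T_2)$. The identity $Q_2(y, T_2) = Q_1(g(y), f^{-1}(T_2)) = Q_1(g(y), T_1) = 1$ handles the total mass. Countable additivity follows because $f^{-1}$ preserves disjoint unions: for disjoint $B_n \in \mathcal{B}(T_2)$ the sets $f^{-1}(B_n)$ are disjoint Borel subsets of $T_1$ (continuity of $f$ supplies Borel measurability), so countable additivity of $Q_1(g(y),\cdot)$ transfers through the defining identity. For measurability in the first argument, I would write $Q_2(\cdot, B) = Q_1(g(\cdot), f^{-1}(B))$, which is the composition of the measurable map $Q_1(\cdot, f^{-1}(B))$ with the continuous (hence Borel-measurable) map $g$, and is therefore measurable. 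This is the only place the right-inverse is essential.

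For the second conclusion, the initial distribution of $f \circ Z$ at time $0$ is computed directly: $\mathbb{P}(f(Z_0) \in B) = \mathbb{P}(Z_0 \in f^{-1}(B)) = \rho(f^{-1}(B)) = (\rho \circ f^{-1})(B)$. For the Markov property of $W := f \circ Z$, let $\mathcal{F}_k^Z := \sigma(Z_0,\ldots,Z_k)$ and $\mathcal{F}_k^W := \sigma(W_0,\ldots,W_k)$, and observe that $\mathcal{F}_k^W \subseteq \mathcal{F}_k^Z$ since each $W_j$ is a measurable function of $Z_j$. Applying the tower property and the Markov property of $Z$,
\[
\mathbb{P}(W_{k+1} \in B \mid \mathcal{F}_k^W) = \mathbb{E}\!\left[\mathbb{P}(Z_{k+1} \in f^{-1}(B) \mid \mathcal{F}_k^Z) \,\big|\, \mathcal{F}_k^W\right] = \mathbb{E}\!\left[Q_1(Z_k, f^{-1}(B)) \,\big|\, \mathcal{F}_k^W\right].
\]
The defining identity rewrites the integrand as $Q_2(f(Z_k), B) = Q_2(W_k, B)$, which is already $\mathcal{F}_k^W$-measurable, so the conditional expectation collapses to $Q_2(W_k, B)$. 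This is exactly the Markov transition requirement for $W$ with kernel $Q_2$.

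I do not foresee a real obstacle here; the argument is a clean exercise in the functional relationship between $Q_1$ and $Q_2$ together with the tower property. The only subtle point worth care is that measurability of $Q_2(\cdot,B)$ genuinely requires some kind of section (not just surjectivity of $f$), which is why the hypothesis supplies the continuous right-inverse $g$. Everything else is bookkeeping that I would execute without further comment.
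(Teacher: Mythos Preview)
Your proof is correct. The verification that $Q_2$ is a probability kernel is essentially identical to the paper's. For the Markov property of $f\circ Z$, however, you take a different route: you work with the filtrations $\mathcal{F}_k^W \subseteq \mathcal{F}_k^Z$ and use the tower property to collapse $\mathbb{E}[Q_1(Z_k,f^{-1}(B))\mid\mathcal{F}_k^W]$ to $Q_2(W_k,B)$, whereas the paper instead verifies the finite-dimensional distribution characterization of a Markov process (as in Theorem~3.4.1 of Meyn--Tweedie), computing $\mathbb{P}(W_0\in B_0,\ldots,W_j\in B_j)$ as an iterated integral and passing from $(\rho,Q_1)$ to $(\rho\circ f^{-1},Q_2)$ by the change-of-variable formula $y_i=f(x_i)$. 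Your argument is shorter and perhaps more conceptual, relying only on the defining conditional-expectation property of a Markov process; the paper's approach has the advantage of matching the specific existence/characterization theorem it has already invoked, so no translation between equivalent definitions is needed. Both are entirely standard, and neither requires more than the other in terms of hypotheses.
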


\begin{proof}
 Given $y \in T_2$, $y=f(x)$ for some $x \in T_1$ because $f$ is surjective.  Then, by hypothesis, $Q_2(y,\cdot) = Q_2(f(x),\cdot)=Q_1(x,\cdot) \circ f^{-1}$, which is a measure on $\mathcal{B}(T_2)$, since $f$ is continuous and therefore measurable.  Also, $Q_2(y,T_2)=Q_1(x,f^{-1}(T_2))=Q_1(x,T_1)=1$, so $Q_2(y,\cdot)$ is a probability measure.

 Let $g$ be a continuous right-inverse of $f$.  For any 
 $B \in \mathcal{B}(T_2)$, we have $Q_2(\cdot,B)=Q_2((f\circ g)(\cdot),B)=Q_2(f(g(\cdot)),B)=Q_1(g(\cdot),f^{-1}(B))=Q_1(\cdot,f^{-1}(B))\circ g$.  Since $g$ is continuous, it is measurable, so $Q_2(\cdot,B)=Q_1(\cdot,f^{-1}(B))\circ g$ is also measurable.  Thus, $Q_2$ is a probability kernel.
 
Let $\rho$ be a probability measure on $\mathcal{B}(T_1)$, and let $Z$ be a discrete-time Markov process with initial distribution $\rho$ and transition kernel $Q_1$.  Let $\lambda = \rho \circ f^{-1}$.  For clarity below, we write $Q_{1,x}$ for $Q_1(x,\cdot)$, and $Q_{2,y}$ for $Q_2(y,\cdot)$.  Note that the hypothesized relationship between $Q_1$ and $Q_2$ tells us that $Q_{2,f(x)}=Q_{1,x} \circ f^{-1}$ for every $x \in T_1$.

Let a nonnegative integer $j$ and sets $B_0, \ldots, B_j \in \mathcal{B}(T_2)$ be given.  Using Theorem 3.4.1 in \cite{MeynTweedie} and the change-of-variable formula for measure-theoretic integration (see, \textit{e.g.}, Theorem 5.2 in Chapter 1 of \cite{Cinlar}) with $y_i=f(x_i)$, we have
\begin{multline*}
 \mathbb{P}((f \circ Z)_0 \in B_0, \ldots, (f \circ Z)_j \in B_j) 
 = \mathbb{P}(f(Z_0) \in B_0, \ldots, f(Z_j) \in B_j)\\
 = \mathbb{P}(Z_0 \in f^{-1}(B_0), \ldots, Z_j \in f^{-1}(B_j)) \\
 = \int_{x_0 \in f^{-1}(B_0)} \cdots \int_{x_{j^-} \in f^{-1}(B_{j^-})}
 \rho(dx_0)Q_{1,x_0}(dx_1)\cdots Q_{1,x_{j^-}}(f^{-1}(B_j)) \\
 = \int \cdots \int \mathbf{1}_{f^{-1}(B_0)}(x_0)\cdots \mathbf{1}_{f^{-1}(B_{j^-})}(x_{j^-}) \rho(dx_0)Q_{1,x_0}(dx_1)\cdots Q_{1,x_{j^-}}(f^{-1}(B_j))\\
 = \int \cdots \int \mathbf{1}_{B_0}(f(x_0))\cdots \mathbf{1}_{B_{j^-}}(f(x_{j^-})) \rho(dx_0)Q_{1,x_0}(dx_1)\cdots Q_{1,x_{j^-}}(f^{-1}(B_j))\\
 = \int \cdots \int \mathbf{1}_{B_0}(y_0)\cdots \mathbf{1}_{B_{j^-}}(y_{j^-}) \lambda(dy_0)Q_{2,y_0}(dy_1)\cdots Q_{2,y_{j^-}}(B_j)\\
 = \int_{y_0 \in B_0} \cdots \int_{y_{j^-} \in B_{j^-}}\lambda(dy_0)Q_{2,y_0}(dy_1)\cdots Q_{2,y_{j^-}}(B_j).
\end{multline*}
This equation, and another application of Theorem 3.4.1 in \cite{MeynTweedie}, tells us that $f \circ Z$ is a Markov process with initial distribution $\lambda$ and transition kernel $Q_2$, as desired. 
\end{proof}

\subsection{Existence of the Finite-State Process}\label{fsprocess}

\begin{proposition}\label{prop4} If $X$ is as in \cref{prop3}, then $\hat{X} := \pi \circ X$ is a pure jump-type continuous-time Markov process with rate kernel $\hat{\alpha}$ and initial distribution $\rho \circ \pi^{-1}$.\end{proposition}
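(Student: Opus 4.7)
The plan is to apply \cref{Dynkin} to the discrete-time jump chain $Y$ from \cref{prop2}, taking $f = \pi$, $Q_1 = \mu$, and $Q_2 = \hat{\mu}$, and then to lift the resulting discrete-time statement to continuous time via the explicit representation of $X$ supplied by \cref{prop3}. The key structural facts that make this work are (i) the compatibility of $\mu$ with $\pi$, which lets Dynkin's criterion do its job, and (ii) the fact that the rate function $c(\psi,\mathbf{v}) = \theta_d|\psi| + \theta_a(n-|\psi|)$ depends on $(\psi,\mathbf{v})$ only through $|\psi|$, so it already factors as $c = \hat{c} \circ \pi$.

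First I would verify the topological hypotheses of \cref{Dynkin}. Since $\hat{\mathsf{X}} = [n^+]$ carries the discrete topology, $\pi$ is continuous (each fiber $\pi^{-1}(\{i\})$ is clopen in $\mathsf{X}$) and surjective, and it admits a continuous right-inverse via any section, for instance $i \mapsto (\chi_{[i]},\mathbf{0})$ with $\chi_{[i]}(j)=1$ iff $j<i$; continuity is automatic because the domain is discrete. The substantive hypothesis is the intertwining relation $\mu((\psi,\mathbf{v}),\pi^{-1}(B)) = \hat{\mu}(|\psi|,B)$, which by additivity reduces to the case $B=\{j\}$. The support analysis already carried out in the proof of \cref{prop1} shows that each summand $r_i(\psi)(\delta_{s_i(\psi)} \times \bigtimes_{p \in P_i} \mu_p^{(\psi,\mathbf{v})})$ of $\mu((\psi,\mathbf{v}),\cdot)$ is a probability measure concentrated on the fiber $\pi^{-1}(\{|s_i(\psi)|\})$. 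Since $|s_i(\psi)| = |\psi|-1$ when $\psi(i)=1$ and $|s_i(\psi)| = |\psi|+1$ when $\psi(i)=0$, summing the weights $r_i(\psi)$ over each of the two index sets reproduces exactly the coefficients of $\delta_{|\psi|^-}$ and $\delta_{|\psi|^+}$ in the defining formula for $\hat{\mu}(|\psi|,\cdot)$. Invoking \cref{Dynkin} then simultaneously shows that $\hat{\mu}$ is a probability kernel and that $\pi \circ Y$ is a discrete-time Markov process with transition kernel $\hat{\mu}$ and initial distribution $\rho \circ \pi^{-1}$.

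For the continuous-time conclusion, I would use the identity $c = \hat{c}\circ\pi$ to rewrite the jump times of \cref{prop3} as $\tau_k = \sum_{i=1}^k \gamma_i/\hat{c}((\pi \circ Y)_{i^-})$. Applying $\pi$ to the representation $X_t = Y_k$ for $t \in [\tau_k,\tau_{k^+})$ then yields $\hat{X}_t = (\pi \circ Y)_k$ on those same intervals, which is precisely the construction that Theorem 12.18 of \cite{Kallenberg} associates to the skeleton $\pi \circ Y$ and rate kernel $\hat{\alpha} = \hat{c}\hat{\mu}$. The nonexplosion hypothesis $\sum_k \gamma_k/\hat{c}((\pi \circ Y)_{k^-}) = \infty$ follows verbatim from the strong-law argument used in \cref{prop3} since $\hat{c}$ is bounded, and the side condition $\hat{\alpha}(i,\{i\}) = 0$ drops out of the intertwining relation (each summand of $\mu$ leaves the fiber $\pi^{-1}(\{|\psi|\})$ with probability one). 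That theorem therefore identifies $\hat{X}$ as a pure jump-type continuous-time Markov process on $\hat{\mathsf{X}}$ with rate kernel $\hat{\alpha}$ and initial distribution $\rho \circ \pi^{-1}$.

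The one place requiring care is the intertwining step: one must be sure that every summand of $\mu((\psi,\mathbf{v}),\cdot)$ pushes its mass only onto the adjacent fiber $\pi^{-1}(\{|\psi|\mp 1\})$, with no leakage. This is essentially bookkeeping rather than a deep obstacle, because the concentration of the measures $\mu_p^{(\psi,\mathbf{v})}$ was already pinned down (through the auxiliary maps $D_i$ and $A_i$) in the proof of \cref{prop1}; the present proposition only asks us to read off the resulting support information one layer up, at the level of $|\psi|$.
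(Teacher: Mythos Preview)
Your proposal is correct and follows essentially the same route as the paper: verify the hypotheses of \cref{Dynkin} (same right-inverse, same reduction of the intertwining identity to singletons $\{j\}$) to conclude that $\pi\circ Y$ is Markov with kernel $\hat{\mu}$, then use $c=\hat{c}\circ\pi$ to recognize the representation of $\hat{X}$ as precisely the one to which Theorem~12.18 of \cite{Kallenberg} applies. The only cosmetic difference is that the paper verifies the intertwining relation by a direct summation over $i\in[n]$ rather than by appealing to the support description from the proof of \cref{prop1}, and it reads off $\hat{\alpha}(i,\{i\})=0$ directly from the formula for $\hat{\mu}$ rather than from the intertwining.
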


\begin{proof}
By definition of $\hat{X}$ in terms of $X$, the former has initial distribution $\rho \circ \pi^{-1}$ because the latter has initial distribution $\rho$.
 Because of $X$'s definition in terms of $Y$ from \cref{prop2}, $\hat{X}_t=\hat{Y}_k$ for $t \in [\tau_k,\tau_{k^+})$, where $\hat{Y} := \pi \circ Y$, $\tau_k := \sum_{i=1}^k(\gamma_i/\hat{c}(\hat{Y}_{i^-}))$, and $(\gamma_i)$ is a sequence of independent standard exponential random variables that are independent from $Y$.  Because $\hat{\alpha}(i,\{i\})= \hat{c}(i)\hat{\mu}(i,\{i\})=0$ for every $i \in \hat{\mathsf{X}}$ (by the formula for $\hat{\mu}$), Theorem 12.18 in \cite{Kallenberg} will show that $\hat{X}$ is a pure jump-type continuous-time Markov process with rate kernel $\hat{\alpha}$ if we can show that $\hat{Y}$ is a discrete-time Markov process with transition kernel $\hat{\mu}$.  
 
 To this end, we will use the version of Dynkin's Criterion presented in Lemma \ref{Dynkin}.  Note that $\pi$ is a continuous surjection.  Let $\mathbf{\zeta}$ be the zero element of $E^{[n^+]}$, define $g: \hat{\mathsf{X}} \rightarrow \mathsf{X}$ by the formula $g(i) := (\mathbf{1}_{[i]},\mathbf{\zeta})$, and note that $g$ is a continuous right-inverse of $\pi$.  The last (and main) hypothesis of Dynkin's Criterion is that $\mu((\psi,\mathbf{v}),\pi^{-1}(B))=\hat{\mu}(|\psi|,B)$ for every $(\psi,\mathbf{v}) \in \mathsf{X}$ and every $B \subseteq \hat{\mathsf{X}}$.  Since $\hat{\mu}(|\psi|,\cdot)$ is additive for every $\psi$, it suffices to consider $B$ of the form $\{j\}$.  Note that
\begin{multline*}
 \mu((\psi,\mathbf{v}),\pi^{-1}(\{j\}))
 = \sum_{i \in [n]} r_i(\psi)\delta_{|s_i(\psi)|}(\{j\})\\
 = \sum_{i \in \psi^{-1}(\{1\})} r_i(\psi)\delta_{|s_i(\psi)|}(\{j\})
 + \sum_{i \in \psi^{-1}(\{0\})} r_i(\psi)\delta_{|s_i(\psi)|}(\{j\})
 \\
 = \frac{1}{\theta_d|\psi|+\theta_a(n-|\psi|)}\left(\sum_{i \in \psi^{-1}(\{1\})} \theta_d\delta_{|\psi|}(\{j^+\})
 + \sum_{i \in \psi^{-1}(\{0\})} \theta_a\delta_{|\psi|}(\{j^-\})\right)
 \\
  = \frac{\theta_d|\psi|\delta_{|\psi|}(\{j^+\})+\theta_a(n-|\psi|)\delta_{|\psi|}(\{j^-\})}{\theta_d|\psi|+\theta_a(n-|\psi|)}
  = \frac{\theta_d|\psi|\delta_{|\psi|^-}+\theta_a(n-|\psi|)\delta_{|\psi|^+}}{\theta_d|\psi|+\theta_a(n-|\psi|)}(\{j\})\\ = \hat{\mu}(|\psi|,\{j\}).
\end{multline*}
Dynkin's Criterion therefore implies that $\hat{Y}$ is a discrete-time Markov process with transition kernel $\hat{\mu}$. 
\end{proof}

\subsection{Long-Time Behavior of the Finite-State Process}\label{ltbfsprocess}

\begin{proposition}\label{prop5}  The unique invariant distribution $\sigma$ for the rate kernel $\hat{\alpha}$ is given by the formula
\begin{equation}
\label{sigmadef}
 \sigma := \frac{1}{(\theta_d+\theta_a)^n}\sum_{k \in \hat{\mathsf{X}}} \binom{n}{k}\theta_d^{n-k}\theta_a^k\delta_k. 
\end{equation}
If $\hat{Z}$ is a pure jump-type continuous-time Markov process with rate kernel $\hat{\alpha}$, then the distribution of $\hat{Z}_t$ converges to $\sigma$ as $t \rightarrow \infty$, regardless of the distribution of $\hat{Z}_0$.  \end{proposition}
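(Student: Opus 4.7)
The plan is to recognize that $\hat\alpha$ is the generator of a birth-death chain on the finite set $\hat{\mathsf{X}} = \{0,1,\ldots,n\}$, verify invariance of $\sigma$ by detailed balance, and then invoke the standard ergodic theorem for finite irreducible continuous-time Markov chains.

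First I would unpack the formulas for $\hat\alpha$: since $\hat\mu(i,\cdot)$ is supported on $\{i^-,i^+\}$, we have
\begin{equation*}
\hat\alpha(i,\{j\}) = \begin{cases} \theta_a(n-i) & \text{if } j = i^+ \\ \theta_d\, i & \text{if } j = i^- \\ 0 & \text{if } j \notin \{i^-,i^+\}. \end{cases}
\end{equation*}
Thus the process is a birth-death chain with birth rate $b_i := \theta_a(n-i)$ and death rate $d_i := \theta_d\, i$.

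Next I would verify the detailed balance relation $\sigma(i)b_i = \sigma(i^+)d_{i^+}$ directly. Writing $\sigma(i) = \binom{n}{i}\theta_a^i \theta_d^{n-i}/(\theta_a+\theta_d)^n$ and using the identity $\binom{n}{i}(n-i) = \binom{n}{i+1}(i+1)$, both sides equal $\binom{n}{i}(n-i)\theta_a^{i+1}\theta_d^{n-i}/(\theta_a+\theta_d)^n$. Detailed balance implies global balance, $\sum_{i} \sigma(i)\hat\alpha(i,\{j\}) = \sigma(j)\hat c(j)$ for every $j \in \hat{\mathsf{X}}$, so $\sigma$ is an invariant distribution for $\hat\alpha$.

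For uniqueness and convergence, I would note that the chain is irreducible: from any state $i$ the birth rate is positive when $i<n$ and the death rate is positive when $i>0$, so every pair of states communicates via a finite sequence of unit jumps. A finite, irreducible continuous-time Markov chain has a unique stationary distribution and the distribution of $\hat Z_t$ converges (in total variation) to it from any initial law; this is a standard result that I would cite from \cite{Kallenberg} (periodicity is not an issue in continuous time). Combined with the previous paragraph, this establishes both that $\sigma$ given by \eqref{sigmadef} is the unique invariant distribution and that $\hat Z_t$ converges in distribution to $\sigma$.

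The only potential obstacle here is purely bookkeeping: identifying the precise theorem in the cited references for uniqueness and convergence on a finite state space, and presenting the detailed balance calculation cleanly. No delicate estimates or coupling arguments are needed, because the state space is finite and the chain is manifestly irreducible.
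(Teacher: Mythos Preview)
Your proposal is correct and follows the same overall strategy as the paper: verify that $\sigma$ is invariant, observe irreducibility, and invoke the standard finite-state ergodic theorem (the paper cites Propositions 12.23 and 12.25 of \cite{Kallenberg}). The only difference is in how invariance is checked. The paper first writes down an invariant measure for the embedded jump chain $\hat\mu$, namely
\[
\sum_{k\in\hat{\mathsf X}}\left(\binom{n^-}{k^-}\theta_a^{k^-}\theta_d^{n-k^-}+\binom{n^-}{k}\theta_a^{k}\theta_d^{n-k}\right)\delta_k,
\]
and then uses Proposition 12.23 of \cite{Kallenberg} (dividing by $\hat c$ and normalizing) to obtain $\sigma$ as invariant for $\hat\alpha$. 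You instead verify detailed balance for $\hat\alpha$ directly. Your route is slightly more elementary and avoids the detour through $\hat\mu$; the paper's route has the side benefit of exhibiting the jump-chain stationary measure explicitly, though that is not used elsewhere.
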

\begin{proof}
 It is straightforward to check that 
 \[
  \sum_{k \in \hat{\mathsf{X}}} \left(\binom{n^-}{k^-}\theta_a^{k^-}\theta^{n-k^-}_d+\binom{n^-}{k}\theta_a^k\theta^{n-k}_d\right)\delta_k
 \]
 is an invariant measure for the transition kernel $\hat{\mu}$ (where we take $\tbinom{a}{b} := 0$ if $b<0$ or $b>a$).  Using Proposition 12.23 in \cite{Kallenberg} and simplifying, we can deduce that $\sigma$ as defined above is an invariant distribution corresponding to rate kernel $\hat{\alpha}$.  Since $\hat{\mu}$ is irreducible, so is $\hat{\alpha}$, so by Proposition 12.25 in \cite{Kallenberg}, $\sigma$ is the unique such distribution, and it is attracting, as in the theorem statement. 
\end{proof}

From here on, we take $\sigma$ to be defined by \eqref{sigmadef}.

\subsection{Integration Formula}\label{integrals}

The following lemma facilitates future calculations, and (in light of the heuristic interpretation of $D_i$ and $A_i$) provides a validation of the formal version of the CTCM.

\begin{lemma} \label{integrallem}
Suppose $f: \mathsf{X} \rightarrow [0,\infty]$ is measurable and $\mathsf{x} \in \mathsf{X}$.  Then
\[
 \int_\mathsf{X} f(\mathsf{y})\mu(\mathsf{x},d\mathsf{y})
 = \sum_{i \in \psi^{-1}(\{1\})} r_i(\psi)f(D_i(\mathsf{x}))
 + \sum_{i \in \psi^{-1}(\{0\})} r_i(\psi) \int_E f(A_i(\mathsf{x},\mathbf{x}))\,d\eta(\mathbf{x}).
\]
\end{lemma}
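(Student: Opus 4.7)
The plan is to unfold the sum-of-products structure of $\mu$ and verify the claim term by term, splitting the sum according to the value of $\psi(i)$. Starting from the definition
\[
\mu(\mathsf{x},\cdot) = \sum_{i \in [n]} r_i(\psi)\,\lambda_i,\qquad \lambda_i := \delta_{s_i(\psi)} \times \bigtimes_{p \in P_i} \mu_p^{(\psi,\mathbf{v})},
\]
the monotone convergence theorem (applied to approximate $f$ by simple functions, or more directly since $f$ is nonnegative) gives
\[
 \int_{\mathsf{X}} f(\mathsf{y})\,\mu(\mathsf{x},d\mathsf{y}) = \sum_{i \in [n]} r_i(\psi) \int_{\mathsf{X}} f\,d\lambda_i,
\]
so it suffices to evaluate $\int f\,d\lambda_i$ separately in the two cases $\psi(i)=1$ and $\psi(i)=0$.

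For the detachment case $\psi(i)=1$, the analysis already carried out inside the proof of \cref{prop1} shows that $\lambda_i$ is a point mass at $D_i(\mathsf{x})$: the factors $\mu_{\{j\}}^{(\psi,\mathbf{v})}$ for $j\neq i$ are point masses at $\mathbf{v}(j)$, and $\mu_{\{i,n\}}^{(\psi,\mathbf{v})}$ is a point mass whose two coordinates are precisely the $i$th and $n$th coordinates of $D_i(\mathsf{x})$ (either leaving $\mathbf{v}(n)$ fixed if $|\psi|=1$, or shifting it by $-(\mathbf{v}(i)-\mathbf{v}(n))/|\psi|^-$ otherwise). Hence $\int f\,d\lambda_i = f(D_i(\mathsf{x}))$, contributing the first sum in the claimed formula.

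For the attachment case $\psi(i)=0$, the key step is the pushforward identity $\int h\,d(\nu \circ g^{-1}) = \int h\circ g\,d\nu$, applied repeatedly. The factor $\mu_{\{i,n\}}^{(\psi,\mathbf{v})}$ is $(\eta \times I)$ pushed forward through $F_i \circ S_{(-\mathbf{v}(n),-|\psi|^+\mathbf{v}(n),1,1/|\psi|^+)}$, and the measure $\eta \times I$ is concentrated on the diagonal of $E \times E$ in the sense that $\int h(\mathbf{x},\mathbf{y})\,d(\eta \times I)(\mathbf{x},\mathbf{y}) = \int h(\mathbf{x},\mathbf{x})\,d\eta(\mathbf{x})$. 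Substituting in the explicit formula for the scale-and-translate map and for $F_i$, the composition sends the diagonal point $(\mathbf{x},\mathbf{x})$ to the function $\{(i,\mathbf{x}+\mathbf{v}(n)),(n,\mathbf{x}/|\psi|^+ + \mathbf{v}(n))\}$, which combined with the point masses at $\mathbf{v}(j)$ for $j \notin \{i,n\}$ and with $\delta_{s_i(\psi)}$ yields exactly $A_i(\mathsf{x},\mathbf{x})$. Therefore $\int f\,d\lambda_i = \int_E f(A_i(\mathsf{x},\mathbf{x}))\,d\eta(\mathbf{x})$, contributing the second sum.

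Adding the two cases and weighting by $r_i(\psi)$ gives the claimed identity. The only real obstacle is bookkeeping: one must track carefully which indices appear in which partition block and verify that the pushforward through $F_i$ followed by the union with the other point-mass factors reassembles the full tuple in $\{0,1\}^{[n]} \times E^{[n^+]}$, at which point the definitions of $D_i$ and $A_i$ were designed precisely to make the two sides match. Measurability of $\mathsf{x}\mapsto D_i(\mathsf{x})$ and $(\mathsf{x},\mathbf{x})\mapsto A_i(\mathsf{x},\mathbf{x})$ (which justifies the use of Fubini/pushforward in the attachment case) follows from their explicit coordinate-wise formulas.
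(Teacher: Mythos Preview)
Your proof is correct and follows essentially the same route as the paper: decompose $\mu(\mathsf{x},\cdot)$ as $\sum_i r_i(\psi)\lambda_i$, then evaluate $\int f\,d\lambda_i$ by cases on $\psi(i)$, identifying $\lambda_i=\delta_{D_i(\mathsf{x})}$ in the detachment case and unwinding the pushforward through $F_i\circ S$ and the diagonal structure of $\eta\times I$ in the attachment case. The only cosmetic difference is that you cite the proof of \cref{prop1} for the point-mass identification when $\psi(i)=1$, whereas the paper re-derives it in two subcases ($|\psi|=1$ and $|\psi|>1$) directly from the formula for $\mu_{\{i,n\}}^{(\psi,\mathbf{v})}$.
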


\begin{proof}
 Let $x = (\psi,\mathbf{v})$.  By definition of $\mu$,
\begin{align}
 \int_\mathsf{X} f(\mathsf{y})\mu(\mathsf{x},d\mathsf{y})
 &=\sum_{i \in [n]} r_i(\psi) \int_\mathsf{X} f(\mathsf{y})\,d \left(\delta_{s_i(\psi)} \times \bigtimes_{p \in P_i} \mu_p^{(\psi,\mathbf{v})}\right)(\mathsf{y}) \notag\\
 &=\sum_{i \in [n]} r_i(\psi) \int_{E^{[n^+]}} f(s_i(\psi),\mathbf{w})\,d \left(\bigtimes_{p \in P_i} \mu_p^{(\psi,\mathbf{v})}\right)(\mathbf{w}) \notag\\
 &=\sum_{i \in [n]} r_i(\psi) \int_{E^{\{i,n\}}} f(s_i(\psi),\mathbf{v}|_{[n]\setminus\{i\}} \cup \mathbf{z})
 \,d\mu_{\{i,n\}}^{(\psi,\mathbf{v})}(\mathbf{z}). \label{6.1}
\end{align}

Suppose $|\psi|=\psi(i)=1$.  Then $\mu_{\{i,n\}}^{(\psi,\mathbf{v})} = \delta_{\mathbf{v}|_{\{i,n\}}}$, so
\begin{multline}
 \int_{E^{\{i,n\}}} f(s_i(\psi),\mathbf{v}|_{[n]\setminus\{i\}} \cup \mathbf{z})\,d\mu_{\{i,n\}}^{(\psi,\mathbf{v})}(\mathbf{z}) \\
 = \int_{E^{\{i,n\}}} f(s_i(\psi),\mathbf{v}|_{[n]\setminus\{i\}} \cup \mathbf{z})\,d\delta_{\mathbf{v}|_{\{i,n\}}}(\mathbf{z}) 
 = f(s_i(\psi),\mathbf{v})
 = f(D_i(\mathsf{x})). \label{6.2}
\end{multline}

Suppose, instead, that $|\psi|>\psi(i)=1$.  Then
\[
\mu_{\{i,n\}}^{(\psi,\mathbf{v})} = \delta_{\{(i,\mathbf{v}(i)),(n,\mathbf{v}(n)-(\mathbf{v}(i)-\mathbf{v}(n))/|\psi|^-)\}},
\]
so
\begin{multline}
  \int_{E^{\{i,n\}}} f(s_i(\psi),\mathbf{v}|_{[n]\setminus\{i\}} \cup \mathbf{z})\,d\mu_{\{i,n\}}^{(\psi,\mathbf{v})}(\mathbf{z}) \\
  = \int_{E^{\{i,n\}}} f(s_i(\psi),\mathbf{v}|_{[n]\setminus\{i\}} \cup \mathbf{z})\,d\delta_{\{(i,\mathbf{v}(i)),(n,\mathbf{v}(n)-(\mathbf{v}(i)-\mathbf{v}(n))/|\psi|^-)\}}(\mathbf{z}) \\
 = f\left(s_i(\psi),\mathbf{v}|_{[n]} \cup \left\{\left(n,\mathbf{v}(n)-\frac{\mathbf{v}(i)-\mathbf{v}(n)}{|\psi|^-}\right)\right\}\right)
 = f(D_i(\mathsf{x})). \label{6.3}
\end{multline}

Finally, suppose that $\psi(i)=0$.  Given $(\mathbf{x},\mathbf{y}) \in E \times E$, we have
\[
 (F_i \circ S_{(-\mathbf{v}(n),-|\psi|^+\mathbf{v}(n),1,1/|\psi|^+)})(\mathbf{x},\mathbf{y}) = \left(\mathbf{x}+\mathbf{v}(n),\frac{\mathbf{y}}{|\psi|^+}+\mathbf{v}(n)\right),
\]
so
\begin{multline}
  \int_{E^{\{i,n\}}} f(s_i(\psi),\mathbf{v}|_{[n]\setminus\{i\}} \cup \mathbf{z})\,d\mu_{\{i,n\}}^{(\psi,\mathbf{v})}(\mathbf{z}) \\
  = \int_{E \times E}\!
  f\left(s_i(\psi),\mathbf{v}|_{[n]\setminus\{i\}} \cup \left\{(i,\mathbf{x}+\mathbf{v}(n)),\left(n,\frac{\mathbf{y}}{|\psi|^+}+\mathbf{v}(n)\right)\right\}\right)\,d(\eta \times I)(\mathbf{x},\mathbf{y}) \\
  = \int_E\!
  d\eta(\mathbf{x}) \int_E\!\! I(\mathbf{x},d\mathbf{y})f\left(s_i(\psi),\mathbf{v}|_{[n]\setminus\{i\}} \cup \left\{(i,\mathbf{x}+\mathbf{v}(n)),\left(n,\frac{\mathbf{y}}{|\psi|^+}+\mathbf{v}(n)\right)\right\}\right) \\
  = \int_E f\left(s_i(\psi),\mathbf{v}|_{[n]\setminus\{i\}} \cup \left\{(i,\mathbf{x}+\mathbf{v}(n)),\left(n,\frac{\mathbf{x}}{|\psi|^+}+\mathbf{v}(n)\right)\right\}\right)\,d\eta(\mathbf{x})\\
  = \int_E f(A_i(\mathsf{x},\mathbf{x}))\,d\eta(\mathsf{x}).
  \label{6.4}
\end{multline}

Substituting \eqref{6.2}, \eqref{6.3}, and \eqref{6.4} into \eqref{6.1} gives the desired formula. 
\end{proof}

\subsection{Bounds on Growth and Movement}\label{secgrowth}

In the remaining subsections of Section 4, we focus on location in the physical space $E$.  For this reason, we introduce some additional notation:
\begin{itemize}
\item $|\cdot|$ is the $\infty$-norm on $E$;
\item for each $i \in [n^+]$, $f_i: \mathsf{X} \rightarrow E$ is the projection $f_i(\psi,\mathbf{v}) := \mathbf{v}(i)$;
\item $g: \mathsf{X} \rightarrow [0,\infty]$ is defined by the formula $g(\mathsf{x}) := \max
\{|f_i(\mathsf{x})|: i \in [n^+]\}$.
\end{itemize}
From here forward, we also assume that $\eta$ is supported on a compact set, and pick $R>0$ such that $\eta$ is supported on $\{\mathbf{x} \in E: |\mathbf{x}|\leq R\}$.

\begin{lemma}\label{lemgrowth}
 Let $Y$ be as in Proposition \ref{prop2}, and let $k$ be a whole number.  Then $g(Y_{k^+}) \leq g(Y_k)+R$ almost surely.  Therefore, by induction, $g(Y_k) \leq g(Y_0) + kR$ almost surely.
\end{lemma}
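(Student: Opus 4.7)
The plan is to reduce everything to a case analysis of how $g$ can change in one step of the discrete-time chain $Y$. Conditioned on $Y_k = \mathsf{x} = (\psi,\mathbf{v})$, the new state $Y_{k^+}$ is drawn from $\mu(\mathsf{x},\cdot)$. By \cref{integrallem} (applied to indicator functions), $\mu(\mathsf{x},\cdot)$ is supported on the set
\[
 \{D_i(\mathsf{x}) : i \in \psi^{-1}(\{1\})\} \;\cup\; \{A_i(\mathsf{x},\mathbf{x}) : i \in \psi^{-1}(\{0\}),\ \mathbf{x} \in \mathrm{supp}(\eta)\},
\]
so it suffices to show that $g(D_i(\mathsf{x})) \leq g(\mathsf{x})$ for every admissible $i$, and $g(A_i(\mathsf{x},\mathbf{x})) \leq g(\mathsf{x})+R$ for every admissible $i$ and every $\mathbf{x}$ with $|\mathbf{x}|\leq R$. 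Since $\eta$ is supported on $\{|\mathbf{x}|\leq R\}$, this will establish the first assertion. The second assertion is then a routine induction on $k$.

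For the detachment case, if $|\psi|\leq 1$ then $D_i(\mathsf{x}) = (s_i(\psi),\mathbf{v})$, so no coordinate $f_j$ changes and $g(D_i(\mathsf{x})) = g(\mathsf{x})$. If $|\psi|>1$, the only component of $\mathbf{v}$ that changes is $\mathbf{v}(n)$. Here I would use the defining constraint for $\mathsf{X}$, namely $|\psi|\,\mathbf{v}(n) = \sum_{j \in [n]} \psi(j)\mathbf{v}(j)$, to check that the new centroid
\[
 \mathbf{v}(n) - \frac{\mathbf{v}(i)-\mathbf{v}(n)}{|\psi|^{-}} \;=\; \frac{1}{|\psi|-1}\sum_{j \in [n]\setminus\{i\}} \psi(j)\mathbf{v}(j)
\]
is a convex combination of the remaining attached positions. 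Its $\infty$-norm is therefore bounded by the maximum of those $|\mathbf{v}(j)|$, which is at most $g(\mathsf{x})$; the other coordinates of $\mathbf{v}$ are unchanged, so $g(D_i(\mathsf{x})) \leq g(\mathsf{x})$.

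For the attachment case, $A_i(\mathsf{x},\mathbf{x})$ leaves $\mathbf{v}(j)$ fixed for $j \in [n]\setminus\{i\}$ and sets the $i$th object's new position to $\mathbf{x}+\mathbf{v}(n)$ and the new centroid to $\mathbf{x}/|\psi|^{+} + \mathbf{v}(n)$. By the triangle inequality and $|\mathbf{x}|\leq R$, both new vectors have $\infty$-norm at most $|\mathbf{v}(n)| + R \leq g(\mathsf{x})+R$, while the unchanged coordinates contribute at most $g(\mathsf{x}) \leq g(\mathsf{x})+R$. Hence $g(A_i(\mathsf{x},\mathbf{x})) \leq g(\mathsf{x})+R$, completing the case analysis.

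The only step that requires any thought beyond bookkeeping is the convex-combination identification of the post-detachment centroid, and this is forced by the state-space constraint defining $\mathsf{X}$. Given how tightly the three cases of $\mu_{\{i,n\}}^{(\psi,\mathbf{v})}$ in \cref{notanddef} match the formulas for $D_i$ and $A_i$, the case analysis should proceed without difficulty; the induction yielding $g(Y_k) \leq g(Y_0)+kR$ a.s.\ is then immediate from the one-step bound together with a standard union-of-null-sets argument over $k$.
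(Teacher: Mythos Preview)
Your proposal is correct and follows essentially the same approach as the paper: both arguments establish the pointwise bounds $g(D_i(\mathsf{x})) \leq g(\mathsf{x})$ and $g(A_i(\mathsf{x},\mathbf{x})) \leq g(\mathsf{x})+|\mathbf{x}|$, feed the indicator $\mathbf{1}_{\{g(\mathsf{y})\leq g(\mathsf{x})+R\}}$ into \cref{integrallem} to obtain $\mu(\mathsf{x},\{g\leq g(\mathsf{x})+R\})=1$, and then pass from this to the almost-sure statement for $(Y_k,Y_{k^+})$. The only notable difference is that you spell out the convex-combination identity for the post-detachment centroid (using the defining constraint of $\mathsf{X}$), whereas the paper simply asserts the bound on $g\circ D_i$ as evident from the formula.
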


\begin{proof}
 The formulas for $D_i$ and $A_i$ indicate that $g(D_i(\mathsf{x})) \leq g(\mathsf{x})$ and $g(A_i(\mathsf{x},\mathbf{x})) \leq g(\mathsf{x})+|\mathbf{x}|$ for every $\mathsf{x} \in \mathsf{X}$ and $\mathbf{x} \in E$ and $i \in [n]$.  Therefore, applying Lemma \ref{integrallem} with $f := \mathbf{1}_{\{\mathsf{y}:g(\mathsf{y})\leq g(\mathsf{x}) + R\}}$ gives
\begin{multline*}
\mu(\mathsf{x},\{\mathsf{y}:g(\mathsf{y})\leq g(\mathsf{x}) + R\})\\
= \!\!\!
\sum_{\substack{i \in \psi^{-1}(\{1\})\\
g(D_i(\mathsf{x})) \leq g(\mathsf{x})+R}} 
\!\!\!r_i(\psi) + \!\!\!
\sum_{i \in \psi^{-1}(\{0\})} \!\!\! 
r_i(\psi) \eta(\{\mathbf{x} \in E: g(A_i(\mathsf{x},\mathbf{x}))\leq g(\mathsf{x})+R\})\\
\geq \sum_{i \in \psi^{-1}(\{1\})} \!\!r_i(\psi) +\!\! \sum_{i \in \psi^{-1}(\{0\})} \!\! r_i(\psi) \eta(\{\mathbf{x} \in E: |\mathbf{x}|\leq R\})
\\
= \sum_{i \in \psi^{-1}(\{1\})} \!\! r_i(\psi) +\!\! \sum_{i \in \psi^{-1}(\{0\})}\!\! r_i(\psi) 
= \sum_{i \in [n]} r_i(\psi) = 1, 
\end{multline*}
so
\begin{equation}
 \mu(\mathsf{x},\{\mathsf{y}:g(\mathsf{y}) \leq g(\mathsf{x}) + R\})=1.
 \label{int}
\end{equation}

Let $\lambda$ be the distribution of $Y_k$.  By Proposition 8.2 in \cite{Kallenberg}, the distribution of $(Y_k,Y_{k^+})$ is $\lambda \times \mu$, so \eqref{int} implies that   
\begin{multline*}
 \mathbb{P}\{g(Y_{k^+})\leq g(Y_k) + R\} 
 = (\lambda \times \mu)(\{(\mathsf{x},\mathsf{y}) \in \mathsf{X} \times \mathsf{X}: g(\mathsf{y}) \leq g(\mathsf{x}) + R\})\\
 = \int_{\mathsf{x} \in \mathsf{X}}\! \lambda(d\mathsf{x}) \mu(\mathsf{x},\{\mathsf{y}:g(\mathsf{y}) \leq g(\mathsf{x}) + R\})
 = \int_{\mathsf{x} \in \mathsf{X}}\! \lambda(d\mathsf{x})= 1.
\end{multline*}
\end{proof}

\begin{lemma}\label{lemiterate}
 Let $Y$ be as in Proposition \ref{prop2}, and let $k_1$ and $k_2$ be whole numbers.  Then $|f_n(Y_{k_2})-f_n(Y_{k_1})| \leq 2g(Y_0)+(k_1+k_2)R$ almost surely.
\end{lemma}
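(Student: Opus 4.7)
The plan is to derive this bound as a direct corollary of Lemma \ref{lemgrowth}, using only the definition of $g$ and the triangle inequality. The key observation is that the projection $f_n$ extracts the centroid coordinate of a state in $\mathsf{X}$, so by the definition
\[
 g(\mathsf{x}) = \max\{|f_i(\mathsf{x})| : i \in [n^+]\},
\]
we have the pointwise inequality $|f_n(\mathsf{x})| \leq g(\mathsf{x})$ for every $\mathsf{x} \in \mathsf{X}$.

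Next, I would apply Lemma \ref{lemgrowth} twice, once with $k=k_1$ and once with $k=k_2$, to obtain
\[
 g(Y_{k_1}) \leq g(Y_0) + k_1 R \quad \text{and} \quad g(Y_{k_2}) \leq g(Y_0) + k_2 R
\]
almost surely. Since the union of two null sets is null, both inequalities hold simultaneously on a single full-measure event. On that event, the triangle inequality gives
\[
 |f_n(Y_{k_2})-f_n(Y_{k_1})| \leq |f_n(Y_{k_2})| + |f_n(Y_{k_1})| \leq g(Y_{k_2}) + g(Y_{k_1}) \leq 2g(Y_0) + (k_1+k_2)R,
\]
which is exactly the claim.

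There is no real obstacle here; the inequality is essentially a repackaging of Lemma \ref{lemgrowth} using that the centroid is one of the coordinates bounded by $g$. The only bookkeeping concern is making sure the two almost-sure statements from Lemma \ref{lemgrowth} are combined on a common full-measure event, which is immediate from countable subadditivity of the probability measure.
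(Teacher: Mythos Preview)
Your proof is correct and matches the paper's own argument essentially line for line: triangle inequality, then $|f_n| \leq g$, then two applications of Lemma~\ref{lemgrowth}. The only addition is your explicit remark about combining the two almost-sure events, which the paper leaves implicit.
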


\begin{proof}
From the triangle inequality and Lemma \ref{lemgrowth},
\begin{multline*}
 |f_n(Y_{k_2})-f_n(Y_{k_1})|
 \leq |f_n(Y_{k_2})|+|f_n(Y_{k_1})|\\
 \leq g(Y_{k_2}) + g(Y_{k_1})
 \leq g(Y_0)+k_2R+g(Y_0)+k_1R = 2g(Y_0)+(k_1+k_2)R.
\end{multline*}
\end{proof}

\subsection{Global Integrability}\label{integrability}

Here we prove that integrability of the initial location of the objects and their centroid entails their integrability at all later times.  We split off a small part of the argument for reuse.

\begin{lemma}\label{lemexpo}
 The probability that the sum of $k$ independent standard exponential random variables is less than or equal to $C \geq 0$ is no greater than $C^k/(k!)$.
\end{lemma}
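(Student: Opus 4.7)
The plan is to recognize that the sum $S_k := \gamma_1 + \cdots + \gamma_k$ of $k$ independent standard exponential random variables is Erlang-distributed (a gamma distribution with shape $k$ and rate $1$), with density $f_{S_k}(x) = x^{k-1} e^{-x}/(k-1)!$ on $[0,\infty)$. Once this is in hand, the required tail bound drops out in one line by replacing $e^{-x}$ with its trivial upper bound $1$ inside the integral:
\[
\mathbb{P}(S_k \leq C) = \int_0^C \frac{x^{k-1} e^{-x}}{(k-1)!}\,dx \leq \int_0^C \frac{x^{k-1}}{(k-1)!}\,dx = \frac{C^k}{k!}.
\]

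If the authors prefer not to quote the Erlang density as a black box, I would instead run a short induction on $k$. The base case $k=1$ reduces to the elementary inequality $1 - e^{-C} \leq C$ for $C \geq 0$. For the inductive step, I would use independence to write $S_k = S_{k-1} + \gamma_k$ and condition on the value of $\gamma_k$, which has density $e^{-t}$ on $[0,\infty)$; since $S_{k-1} \geq 0$ forces the integrand to vanish for $t > C$, the inductive hypothesis plus the bound $e^{-t} \leq 1$ yield
\[
\mathbb{P}(S_k \leq C) = \int_0^C \mathbb{P}(S_{k-1} \leq C-t)\,e^{-t}\,dt \leq \int_0^C \frac{(C-t)^{k-1}}{(k-1)!}\,dt = \frac{C^k}{k!},
\]
the last equality coming from the substitution $u = C-t$.

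There is essentially no obstacle here; this is a standard tail estimate for gamma-distributed random variables, and the only thing to watch is the clean bookkeeping that discards the $e^{-x}$ (or $e^{-t}$) factor at exactly the right moment so that the remaining integral evaluates to a clean monomial in $C$.
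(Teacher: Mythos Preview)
Your primary argument is correct and essentially identical to the paper's proof: the paper quotes the Erlang density $x \mapsto e^{-x}x^{k-1}/(k-1)!$ (citing Ross), bounds $e^{-x}\le 1$, and integrates to obtain $C^k/k!$. The inductive alternative you offer is also valid but not needed.
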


\begin{proof}
 Such a sum has a probability density function of $x \mapsto e^{-x}x^{k^-}/((k^-)!)$ for $x \geq 0$.
(See, \textit{e.g.}, Proposition 3.1 in Chapter 6 of \cite{Ross}.) Thus, the specified probability is
\[
\int_0^C \frac{e^{-x}x^{k^-}}{(k^-)!}\,dx
\leq \int_0^C \frac{x^{k^-}}{k^-!}\,dx
= \frac{C^k}{k!}.
\]
\end{proof}

\begin{proposition}\label{globalintegrability} 
  Let $\rho$ be a distribution on $\mathsf{X}$ such that $f_i$ is $\rho$-integrable for every $i$.  Let $X$ be as in \cref{prop3}. Then for every $i \in [n^+]$ and $t \geq 0$, $\mathbb{E}(f_i(X_t))$ is well-defined and finite.
\end{proposition}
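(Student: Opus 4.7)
The plan is to show the stronger statement that $\mathbb{E}(|f_i(X_t)|)<\infty$ for every $i \in [n^+]$ and $t\geq 0$, which implies the desired conclusion since $f_i$ is $\mathbb{R}^N$-valued. First, I would invoke the explicit construction of $X$ in Proposition \ref{prop3}: on the event $\{\tau_k \leq t < \tau_{k^+}\}$, we have $X_t = Y_k$, and these events partition the probability space (almost surely), so
\[
 \mathbb{E}(|f_i(X_t)|) = \sum_{k=0}^\infty \mathbb{E}\bigl(|f_i(Y_k)|\, \mathbf{1}_{\{\tau_k \leq t < \tau_{k^+}\}}\bigr)
 \leq \sum_{k=0}^\infty \mathbb{E}\bigl(|f_i(Y_k)|\, \mathbf{1}_{\{\tau_k \leq t\}}\bigr).
\]

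Next, I would dispatch the two factors independently. For $|f_i(Y_k)|$, note that $|f_i(\mathsf{x})| \leq g(\mathsf{x})$ by definition of $g$, so Lemma \ref{lemgrowth} gives $|f_i(Y_k)| \leq g(Y_0) + kR$ almost surely. For the indicator, recall that $c \leq \theta$, so $\tau_k = \sum_{j=1}^k \gamma_j/c(Y_{j^-}) \geq \theta^{-1}\sum_{j=1}^k \gamma_j$, and hence $\mathbf{1}_{\{\tau_k \leq t\}} \leq \mathbf{1}_{\{\sum_{j=1}^k \gamma_j \leq \theta t\}}$. Since $\{Y, \gamma_1, \gamma_2, \ldots\}$ is independent, $Y_0$ is independent of $\sum_{j=1}^k \gamma_j$, and Lemma \ref{lemexpo} yields $\mathbb{P}\bigl(\sum_{j=1}^k \gamma_j \leq \theta t\bigr) \leq (\theta t)^k/k!$. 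Combining these ingredients,
\[
 \mathbb{E}(|f_i(X_t)|) \leq \sum_{k=0}^\infty \bigl(\mathbb{E}(g(Y_0)) + kR\bigr) \frac{(\theta t)^k}{k!}
 = \mathbb{E}(g(Y_0))\, e^{\theta t} + R\theta t\, e^{\theta t}.
\]

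To finish, I need $\mathbb{E}(g(Y_0))<\infty$. Since $Y_0$ is $\rho$-distributed and $g = \max_{j \in [n^+]} |f_j| \leq \sum_{j \in [n^+]} |f_j|$, the hypothesized $\rho$-integrability of every $f_j$ (meaning $|f_j|$ is $\rho$-integrable, as $f_j$ is vector-valued) forces $g$ to be $\rho$-integrable, so $\mathbb{E}(g(Y_0))<\infty$. Thus the right-hand side above is finite, so $\mathbb{E}(|f_i(X_t)|)<\infty$ and $\mathbb{E}(f_i(X_t))$ is well-defined and finite, component-by-component.

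The only delicate point is the independence/measurability bookkeeping: I need to know that the integer-valued random index $k$ for which $\tau_k \leq t < \tau_{k^+}$ is measurable (which is immediate from the $\tau_k$'s being measurable), and that the independence of $Y_0$ from the exponential waiting times $\gamma_j$ (rather than from $\tau_k$ itself, which depends on the whole chain $Y$ through $c(Y_{j^-})$) is what makes the splitting legitimate—this is why I replace $\mathbf{1}_{\{\tau_k \leq t\}}$ with the stronger event $\{\sum \gamma_j \leq \theta t\}$ before separating expectations. Everything else is a routine comparison-series argument.
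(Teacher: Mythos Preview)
Your proposal is correct and follows essentially the same route as the paper's proof: both use the explicit construction of $X$ from Proposition~\ref{prop3}, invoke Lemma~\ref{lemgrowth} to bound $g(Y_k)\leq g(Y_0)+kR$, replace $\{\tau_k\leq t\}$ by $\{\sum\gamma_j\leq\theta t\}$ via \eqref{taubound}, and apply Lemma~\ref{lemexpo}. The only cosmetic difference is that the paper organizes the sum via conditional expectations (using the law of total expectation to recover exactly $\mathbb{E}(g(Y_0))$ for that term, yielding the slightly sharper bound $\mathbb{E}(g(X_0))+R\theta t e^{\theta t}$), whereas you drop the upper constraint $t<\tau_{k^+}$ earlier and pick up an extra harmless factor of $e^{\theta t}$.
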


\begin{proof}
  Let $Y$, $(\gamma_k)$, and $(\tau_k)$ be as in Proposition \ref{prop2} and Proposition \ref{prop3}.  Fix $t \geq 0$, and note that Lemma \ref{lemgrowth} implies that
\begin{align}
 \mathbb{E}(g(X_t)) &= \sum_{k=0}^\infty \mathbb{E}(g(X_t) \mid t \in [\tau_k,\tau_{k^+}))\mathbb{P}(t \in [\tau_k,\tau_{k^+})) \notag \\ 
 &= \sum_{k=0}^\infty \mathbb{E}(g(Y_k) \mid t \in [\tau_k,\tau_{k^+}))\mathbb{P}(t \in [\tau_k,\tau_{k^+})) \notag\\
 &\leq \sum_{k=0}^\infty \mathbb{E}(g(Y_0)+kR \mid t \in [\tau_k,\tau_{k^+}))\mathbb{P}(t \in [\tau_k,\tau_{k^+})) \notag\\
 &= \sum_{k=0}^\infty \mathbb{E}(g(Y_0) \mid t \in [\tau_k,\tau_{k^+}))\mathbb{P}(t \in [\tau_k,\tau_{k^+})) +R\sum_{k=0}^\infty k\mathbb{P}(t \in [\tau_k,\tau_{k^+})) \notag\\
 &= \mathbb{E}(g(Y_0))+R\sum_{k=0}^\infty k\mathbb{P}(t \in [\tau_k,\tau_{k^+})) \notag\\
 &\leq \mathbb{E}(g(X_0)) + R\sum_{k=0}^\infty k \mathbb{P}(t \geq \tau_k).
\label{intexpand}
\end{align}

Since each $f_i$ is $\rho$-integrable, so is $g$, so
$
\mathbb{E}(g(X_0)) = \int (g \circ X_0)d\mathbb{P} = \int g d\rho < \infty$.
By the formulas for $\tau_k$ and $c$, we have
\begin{equation}
\tau_k = \sum_{i=1}^k \frac{\gamma_i}{c(Y_{i^-})} \geq \frac{1}{\theta}\sum_{i=1}^k \gamma_i \label{taubound}
\end{equation}
for $k \geq 1$.
By Lemma \ref{lemexpo}, \eqref{intexpand} and \eqref{taubound} imply that
\[
 \mathbb{E}(g(X_t)) 
 \leq \mathbb{E}(g(X_0)) + R\sum_{k=1}^\infty k \frac{(\theta t)^k}{k!}
= \mathbb{E}(g(X_0)) + R\theta te^{\theta t} < \infty.
\]
By the definition of $g$, this estimate shows that $\mathbb{E}(f_i(X_t))$ is defined and finite for every $i \in [n^+]$.  
\end{proof}

\subsection{Continuity}\label{continuity}

\begin{lemma}\label{explem}
 If $\gamma$ is a standard exponential random variable, $\xi$ is a real-valued random variable, and $B$ is a non-negligible event such that $(\xi,B)$ is independent of $\gamma$, and $a < b$ are real numbers, then
\[
\mathbb{P}(a+\xi < \gamma \leq b+\xi \bigm| B) \leq b-a.
\]
\end{lemma}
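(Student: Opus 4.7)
The plan is to exploit two facts: that the density of a standard exponential is bounded above by $1$, and that conditioning on $B$ (and on $\xi$) does not change the distribution of $\gamma$, owing to the independence hypothesis.

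First I would condition on $\xi$ inside $B$. The independence assumption that $(\xi,B)$ is independent of $\gamma$ means that for any fixed real number $x$, the joint distribution of $(\xi,\mathbf{1}_B,\gamma)$ factors so that $\gamma$ remains standard exponential after conditioning on $\{\xi=x\}\cap B$ (formally, one uses that $\mathbb{P}(\gamma\in\cdot,\xi\in A,B)=\mathbb{P}(\gamma\in\cdot)\mathbb{P}(\xi\in A,B)$ for Borel $A$, and then writes
\[
\mathbb{P}(a+\xi<\gamma\leq b+\xi\mid B)=\int_{\mathbb{R}} \mathbb{P}(a+x<\gamma\leq b+x)\,\mathbb{P}_{\xi\mid B}(dx),
\]
where $\mathbb{P}_{\xi\mid B}$ is the conditional distribution of $\xi$ given $B$).

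Next I would estimate the integrand pointwise. For each fixed $x\in\mathbb{R}$,
\[
\mathbb{P}(a+x<\gamma\leq b+x)=\int_{\max\{a+x,0\}}^{\max\{b+x,0\}} e^{-t}\,dt\leq \max\{b+x,0\}-\max\{a+x,0\}\leq b-a,
\]
since $e^{-t}\leq 1$ on $[0,\infty)$ and $t\mapsto\max\{t,0\}$ is $1$-Lipschitz. (If $b+x\leq 0$ the probability is zero, so this bound is trivially valid.)

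Finally I would integrate this pointwise bound against $\mathbb{P}_{\xi\mid B}$, which is a probability measure, to obtain the desired inequality. The main obstacle, such as it is, is purely bookkeeping: writing the conditional probability as an integral over $\xi$'s conditional distribution in a way that cleanly uses the hypothesis that $(\xi,B)$ (not just $\xi$ alone) is independent of $\gamma$. Once that factorization is in hand, the proof reduces to the uniform bound $e^{-t}\leq 1$.
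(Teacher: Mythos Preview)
Your argument is correct. Both proofs rest on the same basic fact---the standard exponential density is bounded by $1$---but they deploy it quite differently. You condition on $\xi$ (using Fubini and the product structure coming from independence of $\gamma$ and $(\xi,\mathbf{1}_B)$) to reduce to the deterministic-interval bound $\mathbb{P}(a+x<\gamma\leq b+x)\leq b-a$, then integrate against $\mathbb{P}_{\xi\mid B}$. The paper instead discretizes $\gamma$ into intervals of width $(b-a)/M$, uses independence to strip the conditioning on $\gamma$ from the $(\xi,B)$-event, and then observes that the enlarged $\xi$-intervals cover each point at most $M+1$ times; letting $M\to\infty$ removes the overshoot. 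Your route is shorter and more transparent; the paper's discretization avoids explicitly invoking a conditional distribution of $\xi$ given $B$, working entirely with unconditional probabilities and a limiting argument, which may have been the authors' stylistic preference. Substantively, nothing is gained or lost either way.
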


\begin{proof}
 Since $\gamma$'s probability density function is bounded by $1$, the probability that $\gamma$ is in a specified \textit{deterministic} interval is bounded by the width of the interval.  Given a positive integer $M$, use this fact, the law of total probability, and the hypothesized independence to deduce that
\begin{multline*}
 \mathbb{P}(\{a+ \xi < \gamma \leq b+\xi\} \cap B)\\
= \sum_{j=-\infty}^{\infty}
 \mathbb{P}\left(\{-b \leq \xi-\gamma < -a\} \cap B \biggm|
 j \leq \frac{M\gamma}{b-a} < j^+\right)
 \mathbb{P}\left\{j \leq \frac{M\gamma}{b-a} < j^+\right\}\\
\leq \frac{b-a}{M}\sum_{j=-\infty}^{\infty}
 \mathbb{P}\left(\left\{\frac{(b-a)j}{M}-b \leq \xi < \frac{(b-a)j^+}{M}-a\right\} \cap B \biggm|
 j \leq \frac{M\gamma}{b-a} < j^+\right)\\
= \frac{b-a}{M}\sum_{j=-\infty}^{\infty}
 \mathbb{P}\left(\left\{\frac{(b-a)j}{M}-b \leq \xi < \frac{(b-a)j^+}{M}-a\right\} \cap B \right)\\
= \frac{b-a}{M}\sum_{j=-\infty}^{\infty}
 \mathbb{P}\left(\left\{j-M \leq \frac{M(\xi+a)}{b-a} < j+1\right\} \cap B \right)\\
= \frac{(b-a)(M+1)}{M}\mathbb{P}(B).
\end{multline*}
Letting $M \rightarrow \infty$ gives the desired formula.
\end{proof}

\begin{proposition}\label{continuityprop} 
  Let $\rho$ be a distribution on $\mathsf{X}$ such that $f_i$ is $\rho$-integrable for every $i$.  Let $X$ be as in \cref{prop3}. Then $t \mapsto \mathbb{E}(f_n(X_t))$ is continuous.
\end{proposition}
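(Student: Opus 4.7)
The plan is to establish Lipschitz continuity of $t \mapsto \mathbb{E}(f_n(X_t))$ on bounded intervals by showing $\mathbb{E}|f_n(X_{t+h}) - f_n(X_t)| = O(h)$ as $h \rightarrow 0^+$; the left-sided case follows symmetrically by applying the same estimate on the interval $[t+h, t]$. Since $X$ has piecewise constant paths — $X_t = Y_k$ on $[\tau_k, \tau_{k^+})$ by \cref{prop3} — I would open with the pointwise telescoping bound
\[
|f_n(X_{t+h}) - f_n(X_t)| \;\leq\; \sum_{k=1}^\infty |f_n(Y_k) - f_n(Y_{k^-})| \mathbf{1}_{\tau_k \in (t, t+h]}.
\]
Lemma~\ref{lemgrowth} provides the almost-sure bound $|f_n(Y_k) - f_n(Y_{k^-})| \leq 2g(Y_0) + (2k-1)R$, so after taking expectations the remaining work is to control each summand with a factor of $h$ to spare.

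The central step is to extract this factor of $h$ using Lemma~\ref{explem}. Conditioning on $(Y, \gamma_1, \ldots, \gamma_{k-1})$ fixes both $\tau_{k^-}$ and $c(Y_{k^-})$ while leaving $\gamma_k$ an independent standard exponential. The event $\tau_k \in (t, t+h]$ is then equivalent to $\gamma_k \in \bigl(c(Y_{k^-})(t - \tau_{k^-}),\, c(Y_{k^-})(t + h - \tau_{k^-})\bigr]$, an interval of random width at most $\theta h$ since $c \leq \theta$. Setting $\xi := c(Y_{k^-})(t - \tau_{k^-})$ and applying Lemma~\ref{explem} with $a = 0$, $b = \theta h$, $B = \Omega$ yields
\[
\mathbb{E}\bigl[\mathbf{1}_{\tau_k \in (t, t+h]} \bigm| Y, \gamma_1, \ldots, \gamma_{k-1}\bigr] \;\leq\; \theta h \cdot \mathbf{1}_{\tau_{k^-} \leq t + h},
\]
where the extra indicator is inserted because the conditional probability is identically zero when $\tau_{k^-} > t+h$.

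The remainder is bookkeeping. Combining the two displays, taking expectations, and applying Lemma~\ref{lemexpo} conditionally on $Y$ (legitimate because $(\gamma_i)$ is independent of $Y$) to bound $\mathbb{P}(\tau_{k^-} \leq t+h \mid Y) \leq (\theta(t+h))^{k-1}/(k-1)!$ and similarly $\mathbb{E}[g(Y_0)\mathbf{1}_{\tau_{k^-} \leq t+h}] \leq \mathbb{E}(g(Y_0)) \cdot (\theta(t+h))^{k-1}/(k-1)!$, I would arrive at
\[
\mathbb{E}|f_n(X_{t+h}) - f_n(X_t)| \;\leq\; \theta h \sum_{k=1}^\infty \bigl(2\mathbb{E}(g(Y_0)) + (2k-1)R\bigr) \frac{(\theta(t+h))^{k-1}}{(k-1)!}.
\]
The series converges (to a quantity of the same order as the bound in \cref{globalintegrability}), so the estimate is $O(h)$ locally uniformly in $t$, yielding continuity.

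The main obstacle is that $\tau_k$ is entangled with the chain $Y$ through the state-dependent rate $c(Y_{k^-})$, so one cannot read off a window probability from an explicit density for $\tau_k$. Lemma~\ref{explem} is the tool that bypasses this difficulty: it uses only the independence of $\gamma_k$ from the past together with the bounded-density property of the exponential distribution, never touching the law of $\tau_k$ directly. The $\rho$-integrability of the $f_i$ (hence of $g$) is then what renders the resulting series in $k$ finite.
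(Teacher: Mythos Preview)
Your proof is correct and in fact somewhat cleaner than the paper's. Both arguments rest on the same three ingredients --- the growth bound from Lemma~\ref{lemgrowth}/\ref{lemiterate}, the bounded-density fact behind Lemma~\ref{explem}, and the tail estimate of Lemma~\ref{lemexpo} --- but the paper organizes the bookkeeping differently. It partitions $\Omega$ by the triple $(\lfloor g(Y_0)\rfloor,\, j,\, k)$ where $t_1 \in [\tau_j,\tau_{j^+})$ and $t_2 \in [\tau_{j+k},\tau_{(j+k)^+})$, treats $k=1$ and $k\geq 2$ by separate arguments, and in the $k=1$ case further conditions on the finite set of possible rate histories $c(Y_\cdot)|_{[j^+]}$ before invoking Lemma~\ref{explem} on events. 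Your telescoping over the individual jumps in $(t,t+h]$, combined with conditioning on the $\sigma$-algebra generated by $(Y,\gamma_1,\ldots,\gamma_{k-1})$, handles all $k$ uniformly and avoids both the discretization of $g(Y_0)$ and the case split; the resulting single sum is easier to close. One minor remark: once you have conditioned on that $\sigma$-algebra, the interval for $\gamma_k$ is deterministic of width $c(Y_{k^-})h \leq \theta h$, so the bound $\leq \theta h$ follows directly from the exponential density being at most $1$; Lemma~\ref{explem}, which is stated for conditioning on an event with a random shift $\xi$, is not quite the matching citation, though the conclusion is of course the same.
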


\begin{proof}
  Let $t_2 \geq t_1 \geq 0$ be given.  Let $Y$, $(\gamma_k)$, and $(\tau_k)$ be as in Proposition \ref{prop3}.  Given whole numbers $i,j,k$, define 
\[
  \Omega_{i,j,k} := \{\lfloor g(Y_0) \rfloor = i,t_1 \in [\tau_j,\tau_{j^+}), t_2 \in [\tau_{j+k},\tau_{(j+k)^+})\}.
\]
By the triangle inequality, the law of total probability, Proposition \ref{prop3}, and Lemma \ref{lemiterate}, we have
\begin{multline}\label{conteq1}
 |\mathbb{E}(f_n(X_{t_2}))-\mathbb{E}(f_n(X_{t_1}))|
 \leq \sum_{i,j,k=0}^\infty \mathbb{E}\left(|f_n(X_{t_2})-f_n(X_{t_1})| \bigm| \Omega_{i,j,k} \right)\mathbb{P}(\Omega_{i,j,k})\\
 = \sum_{i,j,k=0}^\infty\mathbb{E}\left(|f_n(Y_{j+k})-f_n(Y_j)| \bigm| \Omega_{i,j,k} \right)\mathbb{P}(\Omega_{i,j,k})\\ 
 = \sum_{i,j=0}^\infty\sum_{k=1}^\infty\mathbb{E}\left(|f_n(Y_{j+k})-f_n(Y_j)| \bigm| \Omega_{i,j,k} \right)\mathbb{P}(\Omega_{i,j,k})\\
\leq \sum_{i,j=0}^\infty\sum_{k=1}^\infty  
 (2i^+ +(2j+k)R)\mathbb{P}(\Omega_{i,j,k}).
\end{multline}

If $k \geq 2$, note that
\begin{align*}
 \Omega_{i,j,k}
 &\subseteq \{\lfloor g(Y_0) \rfloor = i,t_1 \geq \tau_j, t_2-t_1 \geq \tau_{j+k}-\tau_{j^+}\}\\
 &\subseteq \{\lfloor g(Y_0) \rfloor = i\}
 \cap \left\{\theta t_1 \geq \sum_{\ell=1}^j \gamma_\ell\right\}
 \cap \left\{\theta (t_2-t_1) \geq \sum_{\ell=j+2}^{j+k} \gamma_\ell\right\},
\end{align*}
where sums with empty index ranges are taken to be zero.  The independence of $\{Y,\gamma_1,\gamma_2,\gamma_3,\ldots\}$ means that the three sets being intersected here are independent, so 
\[
 \mathbb{P}(\Omega_{i,j,k}) \leq 
\mathbb{P}\{\lfloor g(Y_0) \rfloor = i\}
\mathbb{P}\left\{\theta t_1 \geq \sum_{\ell=1}^j \gamma_\ell\right\}
\mathbb{P}\left\{\theta (t_2-t_1) \geq \sum_{\ell=j+2}^{j+k} \gamma_\ell\right\}.
\]
Bounding the last two probabilities using Lemma \ref{lemexpo} gives
\begin{equation}
 \label{p2}
 \mathbb{P}(\Omega_{i,j,k}) \leq 
\frac{(\theta t_1)^j}{j!}
\frac{(\theta (t_2 - t_1))^{k^-}}{(k^-)!}
\mathbb{P}\{\lfloor g(Y_0) \rfloor = i\}.
\end{equation}

For $k=1$, we need a more delicate estimate.  Note that
\begin{multline}
\mathbb{P}(\Omega_{i,j,1})
\leq
\mathbb{P}\{\lfloor g(Y_0) \rfloor = i\}
\mathbb{P}(t_1 \geq \tau_j \bigm| \lfloor g(Y_0) \rfloor = i)\\
\times \mathbb{P}( \tau_{j^+} \in (t_1,t_2] \bigm|
\lfloor g(Y_0) \rfloor = i,t_1 \geq \tau_j).
\label{p3}
\end{multline}
For $h$ in the finite set $(c(\mathsf{X}))^{[j^+]}$, define 
\begin{multline*}
 \Theta_{i,j,h} 
 := \{\lfloor g(Y_0) \rfloor = i,t_1 \geq \tau_j, c(Y_\cdot)|_{[j^+]} = h\}\\
 = \left\{\lfloor g(Y_0) \rfloor = i,t_1 \geq \sum_{\ell=1}^j \frac{\gamma_\ell}{h(\ell^-)},  c(Y_\cdot)|_{[j^+]} = h\right\}.
\end{multline*}
Then
\begin{multline}
 \mathbb{P}(\tau_{j^+} \in (t_1,t_2] \bigm|
\lfloor g(Y_0) \rfloor = i,t_1 \geq \tau_j)\\
= \sum_h \mathbb{P}(c(Y_\cdot)|_{[j^+]} = h \bigm| \lfloor g(Y_0) \rfloor = i,t_1 \geq \tau_j)\mathbb{P}(\tau_{j^+} \in (t_1,t_2] \bigm| \Theta_{i,j,h}),
\label{p4}
\end{multline}
where the summation is over $(c(\mathsf{X}))^{[j^+]}$.  Unraveling the formula for $\tau_{j^+}$ on the event $\Theta_{i,j,h}$, we have
\begin{multline}
 \mathbb{P}(\tau_{j^+} \in (t_1,t_2] \bigm| \Theta_{i,j,h})\\
 =
 \mathbb{P}\left( 
 h(j)\left(t_1 - \sum_{\ell=1}^j \frac{\gamma_{\ell}}{h(\ell^-)}\right)
  < \gamma_{j^+} \leq h(j)\left(t_2 - \sum_{\ell=1}^j \frac{\gamma_{\ell}}{h(\ell^-)}\right)
 \biggm| \Theta_{i,j,h}\right).
 \label{p5}
\end{multline}
Since $\gamma_{j^+}$ is independent of $(-h(j)\sum_{\ell=1}^j (\gamma_\ell/h(\ell^-)),\Theta_{i,j,h})$, Lemma \ref{explem} and \eqref{p5} imply that
\[
\mathbb{P}(\tau_{j^+} \in (t_1,t_2] \bigm| \Theta_{i,j,h})
 \leq h(j)(t_2-t_1) \leq \theta(t_2-t_1).
\]
Using this in \eqref{p4} gives 
\begin{multline*}
\mathbb{P}(\tau_{j^+} \in (t_1,t_2] \bigm|
\lfloor g(Y_0) \rfloor = i,t_1 \geq \tau_j)\\
\leq \theta(t_2-t_1) \sum_h \mathbb{P}(c(Y_\cdot)|_{[j^+]} = h \bigm| \lfloor g(Y_0) \rfloor = i,t_1 \geq \tau_j)
= \theta(t_2-t_1),  
\end{multline*}
and using that in \eqref{p3} gives
\begin{equation}
\label{p6}
\mathbb{P}(\Omega_{i,j,1})
\leq
\theta(t_2-t_1)
\mathbb{P}\{\lfloor g(Y_0) \rfloor = i\}
\mathbb{P}(t_1 \geq \tau_j \bigm| \lfloor g(Y_0) \rfloor = i).
\end{equation}
Similarly to the $k \geq 2$ case, but using independence this time, we can apply Lemma \ref{lemexpo} to deduce that
\begin{multline*}
 \mathbb{P}(t_1 \geq \tau_j \bigm| \lfloor g(Y_0) \rfloor = i)
\leq \mathbb{P}\left(\theta t_1 \geq \sum_{\ell=1}^j \gamma_\ell \biggm| \lfloor g(Y_0) \rfloor = i\right)\\
 = \mathbb{P}\left(\theta t_1 \geq \sum_{\ell=1}^j \gamma_\ell\right)
 \leq \frac{(\theta t_1)^j}{j!},
\end{multline*}
so \eqref{p6} implies that
\begin{equation}
\label{p7}
 \mathbb{P}(\Omega_{i,j,1})
\leq
 \frac{(\theta t_1)^j}{j!}
\theta(t_2-t_1)
\mathbb{P}\{\lfloor g(X_0) \rfloor = i\}.
\end{equation}

Applying \eqref{p2} and \eqref{p7} to \eqref{conteq1} yields the estimate
\begin{multline}
|\mathbb{E}(f_n(X_{t_2}))-\mathbb{E}(f_n(X_{t_1}))|\\
\leq
 \sum_{i,j,=0}^\infty \sum_{k=1}^\infty \mathbb{P}\{\lfloor g(X_0) \rfloor = i\}
 \frac{(\theta t_1)^j}{j!}
 \frac{(\theta(t_2-t_1))^{\max\{k^-,1\}}}{(\max\{k^-,1\})!}
(2i^+ +(2j+k)R).
\label{p8}
\end{multline}
Note that
\[
 \sum_{i=0}^\infty i\mathbb{P}\{\lfloor g(X_0) \rfloor = i\}
 = \mathbb{E}\left(\sum_{i=0}^\infty i\mathbf{1}_{\{\lfloor g(X_0)\rfloor = i\}}\right) 
 \leq \mathbb{E}(g(X_0)),
\]
which is finite by the hypothesis of integrability.  (See the proof of Proposition \ref{globalintegrability}.)  Applying this to the right-hand side of \eqref{p8} and calculating shows that this sum is the product of $t_2-t_1$ and an expression that is bounded when $t_1$ and $t_2$ are confined to a compact set.  This implies the continuity of $t \mapsto \mathbb{E}(f_n(X_t))$. 
\end{proof}

\subsection{Velocity}\label{velocity}
Our main result is the following theorem which shows that the expected velocity of of the centroid is dependent on the mean of the $\eta$ distribution and the rate at which the i-sites attach and detach.  In particular if the $\eta$ distribution is rotationally symmetric the velocity is zero.
\begin{theorem}\label{prop6} Let $\rho$ be a distribution on $\mathsf{X}$ such that $\sigma = \rho \circ \pi^{-1}$ and such that $f_i$ is $\rho$-integrable for every $i$.  Let $X$ be as in \cref{prop3}. Then 
\begin{equation}
 \frac{\partial}{\partial t} \mathbb{E}(f_n(X_t)) = \frac{\overline{\eta}\theta_d}{(\theta_d+\theta_a)^n}\left((\theta_d+\theta_a)^n-\theta_d^n\right)
\label{equ:velocity}
\end{equation}
for every $t>0$.
\end{theorem}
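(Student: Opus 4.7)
The plan is to compute the infinitesimal drift of $f_n$, then exploit invariance of $\sigma$ and a Dynkin-type formula to turn the assertion into a finite-state calculation. First I apply \cref{integrallem} to $f_n$ itself. For a detachment of the $i$th object with $|\psi|>1$, the definition of $D_i$ gives $f_n(D_i(\mathsf{x}))-f_n(\mathsf{x})=-(\mathbf{v}(i)-\mathbf{v}(n))/|\psi|^-$, while for $|\psi|=1$ the centroid does not move. For an attachment with perturbation $\mathbf{x}$, $f_n(A_i(\mathsf{x},\mathbf{x}))-f_n(\mathsf{x})=\mathbf{x}/|\psi|^+$. The detachment contributions cancel: the rate $r_i(\psi)=\theta_d/c(\mathsf{x})$ is constant over $i\in\psi^{-1}(\{1\})$, and the defining constraint of $\mathsf{X}$ gives $\sum_{i\in\psi^{-1}(\{1\})}(\mathbf{v}(i)-\mathbf{v}(n))=0$. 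Multiplying the surviving attachment contribution by $c(\mathsf{x})$ yields
\begin{equation}\label{genformula}
\mathcal{L}f_n(\mathsf{x}) := c(\mathsf{x})\!\int_\mathsf{X}\!(f_n(\mathsf{y})-f_n(\mathsf{x}))\,\mu(\mathsf{x},d\mathsf{y}) = \frac{(n-|\psi|)\,\theta_a\,\overline{\eta}}{|\psi|+1}.
\end{equation}
Crucially, $\mathcal{L}f_n(\mathsf{x})$ depends on $\mathsf{x}$ only through $\pi(\mathsf{x})=|\psi|$ and is uniformly bounded.

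Next, \cref{prop4} shows that $\hat{X}:=\pi\circ X$ is a pure jump-type continuous-time Markov process with rate kernel $\hat{\alpha}$ and initial distribution $\rho\circ\pi^{-1}=\sigma$. Since \cref{prop5} identifies $\sigma$ as the (unique) invariant distribution for $\hat{\alpha}$, $\hat{X}_t\sim\sigma$ for every $t\ge 0$. Combining this with \eqref{genformula}, $\mathbb{E}(\mathcal{L}f_n(X_t))$ is the constant
\[
\sum_{k=0}^n\frac{(n-k)\theta_a\overline{\eta}}{k+1}\cdot\frac{\binom{n}{k}\theta_d^{n-k}\theta_a^k}{(\theta_d+\theta_a)^n}
=\frac{\overline{\eta}\theta_a}{(\theta_d+\theta_a)^n}\sum_{k=0}^n\binom{n}{k+1}\theta_d^{n-k}\theta_a^k,
\]
where I used the identity $\tfrac{n-k}{k+1}\binom{n}{k}=\binom{n}{k+1}$. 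Re-indexing by $j=k+1$ and pulling out $\theta_d/\theta_a$ reduces the binomial sum to $\tfrac{\theta_d}{\theta_a}\bigl((\theta_d+\theta_a)^n-\theta_d^n\bigr)$, which rearranges to the right-hand side of \eqref{equ:velocity}.

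It remains to establish the identity $\tfrac{d}{dt}\mathbb{E}(f_n(X_t))=\mathbb{E}(\mathcal{L}f_n(X_t))$. Using the representation $X_t=Y_k$ on $[\tau_k,\tau_{k^+})$ from \cref{prop3}, I expand $\mathbb{E}(f_n(X_{t+h})-f_n(X_t))$ as a triple sum over $(i,j,k)$, where $i=\lfloor g(Y_0)\rfloor$, $j$ is the jump count up to time $t$, and $k$ is the jump count in $(t,t+h]$, in the spirit of the proof of \cref{continuityprop}. The $k=0$ summands vanish. For $k=1$, the strong Markov property and \cref{integrallem} give precisely $h\,\mathcal{L}f_n(Y_j)+O(h^2)$ on each slice, and summing over $j$ (weighted by the marginal of $\tau_j\leq t<\tau_{j^+}$) produces $h\,\mathbb{E}(\mathcal{L}f_n(X_t))+o(h)$. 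For $k\ge 2$, \cref{lemiterate} bounds the integrand by $2g(Y_0)+(2j+k)R$, while \cref{lemexpo} bounds the joint probability by $(\theta h)^k/k!$ times factors independent of $h$; the integrability hypothesis together with \cref{globalintegrability} ensures these bounds are summable and yield a genuine $o(h)$ remainder by dominated convergence. The main obstacle is this last step: isolating the linear-in-$h$ contribution and showing the remainder is truly $o(h)$ uniformly, since $f_n$ is unbounded and one must interlock growth estimates for $g(Y_0)$ with Poisson-type tail estimates for the jump counts, essentially upgrading the continuity argument of \cref{continuityprop} to a differentiability statement.
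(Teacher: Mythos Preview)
Your proposal is correct and reaches the same formula, but the organization differs from the paper's in one meaningful way. You compute the generator $\mathcal{L}f_n$ directly, observe it factors through $\pi$, and then invoke invariance of $\sigma$ under $\hat{\alpha}$ to conclude that $\mathbb{E}(\mathcal{L}f_n(X_t))$ is constant in $t$; this is clean and makes the role of the projected steady state transparent. The paper instead first reduces to the right-hand derivative at $t=0$: it cites \cref{continuityprop} together with a mean-value theorem for one-sided derivatives, and then uses time-homogeneity of $X$ (plus \cref{globalintegrability} and \cref{prop4,prop5} to propagate the hypotheses) to argue that computing the derivative at $t=0$ suffices for all $t>0$. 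Having done so, it conditions on $\Omega_{i,j}=\{\pi(Y_0)=i,\pi(Y_1)=j\}$ and handles the $k=1$ term exactly and the $k\ge 2$ tail via \cref{lemiterate,lemexpo}, just as you sketch. The payoff of the paper's reduction is that the hard differentiability estimate only needs to be carried out at $t=0$, where the explicit representation $X_t=Y_k$ on $[\tau_k,\tau_{k^+})$ is available from the start and no appeal to the strong Markov property at a random intermediate time is required; your route, by contrast, must control the increment $f_n(X_{t+h})-f_n(X_t)$ at a general $t$, which is why you end up with the triple sum over $(i,j,k)$ and an implicit strong-Markov step. Either way the analytic core is the same upgrade of the \cref{continuityprop} argument from $O(h)$ to $o(h)$, but the paper's reduction to $t=0$ buys a shorter and more self-contained version of that step.
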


\begin{proof}
By Proposition \ref{continuityprop} and the mean value theorem for one-sided derivatives in \cite{Minassian:2007}, it suffices to verify that \eqref{equ:velocity} holds for the right-hand derivative.  Furthermore, the time-homogeneity of $X$ means that it suffices to do that verification for $t=0$.  (Proposition \ref{globalintegrability} implies that the hypothesized integrability condition translates to a corresponding integrability condition after a time shift, and Propositions \ref{fsprocess} and \ref{ltbfsprocess} imply that the projected distribution $\sigma$ does not change.)  

Let $Y$, $(\gamma_k)$, and $(\tau_k)$ be as in Proposition \ref{prop3}.
For each $(i,j) \in [n^+] \times [n^+]$, define $\Omega_{i,j} := \{\pi(Y_0)=i,\pi(Y_1)=j\}$.  The numerator of the relevant difference quotient is
\begin{equation}
 \mathbb{E}(f_n(X_t))-\mathbb{E}(f_n(X_0))
 = \sum_{i = 0}^n \sum_{j =0}^n \mathbb{E}(f_n(X_t)-f_n(X_0)\mid \Omega_{i,j})\mathbb{P}(\Omega_{i,j}).
 \label{ex}
\end{equation}
Recall that $(Y_0,Y_1)$ is $(\rho \times \mu)$-distributed, so
\begin{equation}
 \mathbb{P}(\Omega_{i,j})  = (\rho \times \mu)(\pi^{-1}(\{i\}) \times \pi^{-1}(\{j\})) = \int_{\pi^{-1}(\{i\})} \rho(d\mathsf{x}) \int_{\pi^{-1}(\{j\})} \mu(\mathsf{x},d\mathsf{y}).
 \label{p}
\end{equation}
By Lemma \ref{integrallem}, if $\mathsf{x} = (\psi,\mathbf{v})$, then
\begin{multline*}
 \int_{\pi^{-1}(\{j\})} \mu(\mathsf{x},d\mathsf{y}) 
 = \int_\mathsf{X} \mathbf{1}_{\pi^{-1}(\{j\})}(\mathsf{y}) \mu(\mathsf{x},d\mathsf{y})\\
 = \sum_{\ell \in \psi^{-1}(\{1\})} r_\ell(\psi)\mathbf{1}_{\pi^{-1}(\{j\})}(D_\ell(\mathsf{x})) + \sum_{\ell \in \psi^{-1}(\{0\})} r_\ell(\psi)\int_E \mathbf{1}_{\pi^{-1}(\{j\})}(A_\ell(\mathsf{x},\mathbf{x}))\,d\eta(\mathbf{x})\\
 = \sum_{\ell \in \psi^{-1}(\{1\})} r_\ell(\psi)\mathbf{1}_{\{j\}}(|s_\ell(\psi)|) + \sum_{\ell \in \psi^{-1}(\{0\})} r_\ell(\psi)\mathbf{1}_{\{j\}}(|s_\ell(\psi)|)\\
 = \sum_{\ell \in \psi^{-1}(\{1\})} r_\ell(\psi)\mathbf{1}_{\{j^+\}}(|\psi|) + \sum_{\ell \in \psi^{-1}(\{0\})} r_\ell(\psi)\mathbf{1}_{\{j^-\}}(|\psi|)\\
 = |\psi|\frac{\theta_d}{\theta_d|\psi|+\theta_a(n-|\psi|)}\mathbf{1}_{\{j^+\}}(|\psi|) + (n-|\psi|)\frac{\theta_a}{\theta_d|\psi|+\theta_a(n-|\psi|)}\mathbf{1}_{\{j^-\}}(|\psi|)\\
 = \frac{\theta_d(j^+)}{\theta_d(j^+)+\theta_a(n-j^+)}\mathbf{1}_{\pi^{-1}(\{j^+\})}(\mathsf{x}) + \frac{\theta_a(n-j^-)}{\theta_d(j^-)+\theta_a(n-j^-)}\mathbf{1}_{\pi^{-1}(\{j^-\})}(\mathsf{x}).
 \end{multline*}
Plugging this into \eqref{p}, and using the fact that $\rho \circ \pi^{-1} = \sigma$, we get
\begin{equation}
\mathbb{P}(\Omega_{i,j})
= \begin{cases}
 {\displaystyle \frac{\theta_d i}{\theta_d i+\theta_a(n-i)}}\sigma(\{i\}) & \text{if $i=j^+$,}\\
  {\displaystyle \frac{\theta_a (n-i)}{\theta_d i+\theta_a(n-i)}}\sigma(\{i\}) & \text{if $i=j^-$,}\\
  0 & \text{otherwise.}
\end{cases}
\label{pij}
\end{equation}

For the other factor of the summand in \eqref{ex}, we have
\begin{multline}
 \mathbb{E}(f_n(X_t)-f_n(X_0)\mid \Omega_{i,j}) \\
 = \sum_{k=0}^\infty \mathbb{E}(f_n(X_t)-f_n(X_0)\mid \Omega_{i,j} \cap \{t \in [\tau_k,\tau_{k^+})\})\mathbb{P}(t \in [\tau_k,\tau_{k^+}) \mid \Omega_{i,j}) \\
 = \sum_{k=0}^\infty \mathbb{E}(f_n(Y_k)-f_n(Y_0)\mid \Omega_{i,j} \cap \{t \in [\tau_k,\tau_{k^+})\})\mathbb{P}(t \in [\tau_k,\tau_{k^+}) \mid \Omega_{i,j}) \\
 = \mathbb{E}(f_n(Y_1)-f_n(Y_0)\mid \Omega_{i,j} \cap \{t \in [\tau_1,\tau_2)\})
\mathbb{P}(t \in [\tau_1,\tau_2) \mid \Omega_{i,j})\\
+ \sum_{k=2}^\infty \mathbb{E}(f_n(Y_k)-f_n(Y_0)\mid \Omega_{i,j} \cap \{t \in [\tau_k,\tau_{k^+})\})\mathbb{P}(t \in [\tau_k,\tau_{k^+}) \mid \Omega_{i,j}).
\label{ex2}
\end{multline}

We will compute the first term on the right in \eqref{ex2} and estimate the sum that follows it.  From the formula for $\tau_k$, the independence of $Y$ and $\{\gamma_k\}$, and the fact that $(Y_0,Y_1)$ is $(\rho \times \mu)$-distributed, we get
\begin{multline}
 \mathbb{E}(f_n(Y_1)-f_n(Y_0)\mid \Omega_{i,j} \cap \{t \in [\tau_1,\tau_2)\}) \\
 = \mathbb{E}\left(f_n(Y_1)-f_n(Y_0) \biggl| \Omega_{i,j} \cap \left\{ \frac{\gamma_1}{c(Y_0)} \leq t < \frac{\gamma_1}{c(Y_0)}+\frac{\gamma_2}{c(Y_1)}\right\}\right) \\
 = \mathbb{E}\left(f_n(Y_1)-f_n(Y_0) \biggl| \Omega_{i,j} \cap \left\{ \frac{\gamma_1}{\hat{c}(i)} \leq t < \frac{\gamma_1}{\hat{c}(i)}+\frac{\gamma_2}{\hat{c}(j)}\right\}\right) \\
 = \mathbb{E}(f_n(Y_1)-f_n(Y_0)\mid \Omega_{i,j})\\
= \frac{1}{\mathbb{P}(\Omega_{i,j})}\int_{\pi^{-1}(\{i\}) \times \pi^{-1}(\{j\})}(f_n(\mathsf{y})-f_n(\mathsf{x}))\,d(\rho \times \mu)(\mathsf{x},\mathsf{y})\\
= \frac{1}{\mathbb{P}(\Omega_{i,j})}\int_{\pi^{-1}(\{i\})} \rho(d\mathsf{x}) \int_{\pi^{-1}(\{j\})}(f_n(\mathsf{y})-f_n(\mathsf{x}))\mu(\mathsf{x},d\mathsf{y}).
\label{eq}
\end{multline}

Fix $\mathsf{x} = (\psi,\mathbf{v}) \in \mathsf{X}$, and note that Lemma \ref{integrallem} gives
\begin{multline}
 \int_{\pi^{-1}(\{j\})}(f_n(\mathsf{y})-   f_n(\mathsf{x}))\mu(\mathsf{x},d\mathsf{y})
= \int_\mathsf{X} \mathbf{1}_{\pi^{-1}(\{j\})}(\mathsf{y})(f_n(\mathsf{y})-f_n(\mathsf{x})) \mu(\mathsf{x},d\mathsf{y})\\
= \sum_{\ell \in \psi^{-1}(\{1\})} r_\ell(\psi)\mathbf{1}_{\pi^{-1}(\{j\})}(D_\ell(\mathsf{x}))(f_n(D_\ell(\mathsf{x}))-f_n(\mathsf{x}))\\
\qquad + \sum_{\ell \in \psi^{-1}(\{0\})}r_\ell(\psi)\int_E \mathbf{1}_{\pi^{-1}(\{j\})}(A_\ell(\mathsf{x},\mathbf{x}))(f_n(A_\ell(\mathsf{x},\mathbf{x}))-f_n(\mathsf{x}))\,d\eta(\mathbf{x}).
\label{p11}
\end{multline}
If $|\psi|\leq 1$, then $f_n(D_\ell(\mathsf{x}))=\mathbf{v}(n)=f_n(\mathsf{x})$, so the first sum on the right of \eqref{p11} vanishes.
If $|\psi| > 1$, then (because $\mathsf{x} \in \mathsf{X}$) that sum becomes
\begin{multline*}
 -\sum_{\ell \in \psi^{-1}(\{1\})} r_\ell(\psi)\mathbf{1}_{\{j\}}(|s_\ell(\psi)|)
 \frac{\mathbf{v}(\ell)-\mathbf{v}(n)}{|\psi|^-}\\
 = -\frac{\theta_d}{\theta_d|\psi|+\theta_a(n-|\psi|)}\frac{\mathbf{1}_{\{j^+\}}(|\psi|)}{|\psi|^-}\sum_{\ell \in \psi^{-1}(\{1\})} (\mathbf{v}(\ell)-\mathbf{v}(n)) = 0
\end{multline*}
again.  The second sum on the right of \eqref{p11} becomes
\begin{multline*}
\sum_{\ell \in \psi^{-1}(\{0\})}r_\ell(\psi)
\mathbf{1}_{\{j\}}(|s_\ell(\psi)|)
\int_E \frac{\mathbf{x}}{|\psi|^+}\,d\eta(\mathbf{x})
= \sum_{\ell \in \psi^{-1}(\{0\})} r_\ell(\psi)\mathbf{1}_{\{j^-\}}(|\psi|)\frac{\overline{\eta}}{|\psi|^+}\\
= (n-|\psi|)\frac{\theta_a}{\theta_d|\psi|+\theta_a(n-|\psi|)}\mathbf{1}_{\{j^-\}}(|\psi|)\frac{\overline{\eta}}{|\psi|^+}\\
 = \frac{\theta_a(n-\pi(\mathsf{x}))}{\theta_d\pi(\mathsf{x})+\theta_a(n-\pi(\mathsf{x}))}\mathbf{1}_{\pi^{-1}(\{j^-\})}(\mathsf{x})\frac{\overline{\eta}}{j}.
\end{multline*}
Thus, \eqref{p11} becomes
\begin{equation}
 \int_{\pi^{-1}(\{j\})} (f_n(\mathsf{y})-f_n(\mathsf{x}))\mu(\mathsf{x},d\mathsf{y}) 
 = \frac{\theta_a(n-\pi(\mathsf{x}))}{\theta_d\pi(\mathsf{x})+\theta_a(n-\pi(\mathsf{x}))}\mathbf{1}_{\pi^{-1}(\{j^-\})}(\mathsf{x})\frac{\overline{\eta}}{j}.
 \label{eq3}
\end{equation}

From \cref{eq3}, 
\begin{multline*}
 \int_{\pi^{-1}(\{i\})} \rho(d\mathsf{x}) \int_{\pi^{-1}(\{j\})}(f_n(\mathsf{y})-f_n(\mathsf{x}))\mu(\mathsf{x},d\mathsf{y}) \\
 = \int_{\pi^{-1}(\{i\})} \rho(d\mathsf{x}) \frac{\theta_a(n-\pi(\mathsf{x}))}{\theta_d\pi(\mathsf{x})+\theta_a(n-\pi(\mathsf{x}))}\mathbf{1}_{\pi^{-1}(\{j^-\})}(\mathsf{x})\frac{\overline{\eta}}{j}\\
 = \rho(\pi^{-1}(\{i\})) \frac{\theta_a(n-i)}{\theta_di+\theta_a(n-i)}\frac{\overline{\eta}}{i^+}
 = \sigma(\{i\}) \frac{\theta_a(n-i)}{\theta_d i+\theta_a(n-i)}\frac{\overline{\eta}}{i^+}
\end{multline*}
if $i=j^-$ and is $0$ otherwise.  Plugging this and \eqref{pij} into \eqref{eq} yields
\begin{equation}
\mathbb{E}(f_n(Y_1)-f_n(Y_0)\mid \Omega_{i,j} \cap\{t \in [\tau_1,\tau_2)\})
= \begin{cases}
 \frac{\overline{\eta}}{i^+} & \text{if $j=i^+$},\\
 0 & \text{if $j = i^-$}.
 \end{cases}
\label{eq4}
\end{equation}

From the independence of $Y$ and $\{\gamma_k\}$, along with the fact that the $\gamma_k$ are independent standard exponential random variables, we find that (for $j = i^\pm$)
\begin{multline}
 \mathbb{P}(t \in [\tau_1,\tau_2) \mid \Omega_{i,j})\\
 = \mathbb{P}\left( \frac{\gamma_1}{c(Y_0)} \leq t < \frac{\gamma_1}{c(Y_0)} + \frac{\gamma_2}{c(Y_1)} \mid \Omega_{i,j}\right) \\
 = \mathbb{P}\left( \frac{\gamma_1}{\hat{c}(i)} \leq t < \frac{\gamma_1}{\hat{c}(i)} + \frac{\gamma_2}{\hat{c}(j)} \mid \Omega_{i,j}\right) \\
 = \mathbb{P}\left( \frac{\gamma_1}{\hat{c}(i)} \leq t < \frac{\gamma_1}{\hat{c}(i)} + \frac{\gamma_2}{\hat{c}(j)}\right) 
 = \int_0^{t\hat{c}(i)}\int_{\hat{c}(j)(t-x/\hat{c}(i))}^\infty e^{-y}e^{-x}\,dy\,dx\\
 = \frac{e^{-t\hat{c}(i)}-e^{-t\hat{c}(j)}}{\hat{c}(j)-\hat{c}(i)}\hat{c}(i).
 \label{eq5}
\end{multline}

 From Lemma \ref{lemiterate}, and an argument similar to that in the proof of Proposition \ref{globalintegrability}, we see that
\begin{multline}
\left| \sum_{k=2}^{\infty} \mathbb{E}(f_n(Y_k) - f_n(Y_0) \bigm| \Omega_{i,j} \cap \{t \in [\tau_k, \tau_{k^+})\}) \mathbb{P}(t \in [\tau_k, \tau_{k^+}) \bigm| \Omega_{i,j} ) \right|  \\
 \leq \sum_{k=2}^{\infty} \mathbb{E}( \vert f_n(Y_k) - f_n(Y_0) \vert \bigm| \Omega_{i,j} \cap \{t \in [\tau_k, \tau_{k^+})\}) \mathbb{P}(t \in [\tau_k, \tau_{k^+}) \,|\, \Omega_{i,j} )  \\
 \leq \sum_{k=2}^{\infty} \mathbb{E}(2 g(Y_0) + kR \bigm| \Omega_{i,j} \cap \{ t \in [\tau_k, \tau_{k^+})\}) \mathbb{P}(t \in [\tau_k, \tau_{k^+}) \bigm| \Omega_{i,j} )  \\
 \leq 2 \mathbb{E}(g(Y_0) \bigm| \Omega_{i,j}) \sum_{k=2}^{\infty} \mathbb{P}( t \geq \tau_k \bigm| \Omega_{i,j})
 + R \sum_{k=2}^{\infty} k \p(t \geq \tau_k \bigm| \Omega_{i,j} ) \\
  \leq 2 \Ex(g(Y_0) \bigm| \Omega_{i,j}) (e^{t \theta} - 1 - t\theta) + Rt \theta (e^{t\theta} - 1). 
 \label{eq6} 
\end{multline}

\noindent Combining \eqref{ex2}, \eqref{eq4}, \eqref{eq5}, and \eqref{eq6} we have
\begin{multline}
\left| \mathbb{E}(f_n(X_t) - f_n(X_0) \bigm| \Omega_{i,i^+}) - \frac{\overline{\eta}}{i^+} \frac{e^{-t \hat{c} (i)} - e^{-t \hat{c}(i^+)}}{\hat{c}(i^+) - \hat{c}(i)} \hat{c}(i) \right|  \\
\leq 2 \Ex(g(Y_0) \bigm| \Omega_{i,i^+}) (e^{t \theta} - 1 - t\theta) + Rt \theta (e^{t\theta} - 1) 
\label{iplus} 
\end{multline}
and
\begin{multline}
|\mathbb{E}(f_n(X_t) - f_n(X_0) \bigm| \Omega_{i,i^-} )|  \\
  \leq 2 \mathbb{E}(g(Y_0) \bigm| \Omega_{i,i^-}) (e^{t \theta} - 1 - t\theta) + Rt \theta (e^{t\theta} - 1). 
\label{iminus} 
\end{multline}

\noindent Combining \eqref{ex}, \eqref{pij}, \eqref{iplus}, and \eqref{iminus} gives 
\begin{multline}
 \left| \mathbb{E}(f_n(X_t)) - \mathbb{E}(f_n(X_0)) - \sum_{i=0}^{n^-} \frac{ \overline{\eta}}{i^+} \frac{e^{-t \hat{c} (i)} - e^{-t \hat{c}(i^+)}}{\hat{c}(i^+) - \hat{c}(i)} \hat{c}(i) \frac{ \theta_a (n - i)}{ \theta_d i + \theta_a (n - i)} \sigma(\{i\}) \right|    \\
  \leq \sum_{i=0}^{n^-} (2 \mathbb{E}(g(Y_0) \bigm| \Omega_{i,i^+}) (e^{t \theta} - 1 - t\theta) + Rt \theta (e^{t\theta} - 1))   \left( \frac{ \theta_a (n - i)}{ \theta_d i + \theta_a (n - i)} \sigma(\{i\})  \right)    \\
 +  \sum_{i=1}^n (2 \mathbb{E}(g(Y_0) \bigm| \Omega_{i,i^-}) (e^{t \theta} - 1 - t\theta) + Rt \theta (e^{t\theta} - 1) ) \left( \frac{ \theta_d i }{\theta_d i + \theta_a (n - i)} \sigma(\{i\}) \right).   \label{eq28} 
\end{multline}

Since the summands on the right-hand side of \eqref{eq28} are quadratic in $t$, we just need to show that the sum on the left-hand side of \eqref{eq28} has the right limit when divided by $t$.  Calculating, we have
%
%
\begin{multline*}
 \lim_{t \searrow 0} \frac{1}{t}\sum_{i=0}^{n^-}
 \frac{\overline{\eta}}{i^+}
 \frac{e^{-t\hat{c}(i)}-e^{-t\hat{c}(i^+)}}{\hat{c}(i^+)-\hat{c}(i)}\hat{c}(i)
 \frac{\theta_a (n-i)}{\theta_d i+\theta_a(n-i)}\sigma(\{i\})\\
 = \sum_{i=0}^{n^-}
 \frac{\overline{\eta}}{i^+}
 \hat{c}(i)
 \frac{\theta_a (n-i)}{\theta_d i+\theta_a(n-i)}\sigma(\{i\})
 = \sum_{i=0}^{n^-}
 \frac{\overline{\eta}\theta_a (n-i)}{i^+}
 \frac{1}{(\theta_d+\theta_a)^n} \binom{n}{i}\theta_d^{n-i}\theta_a^i\\
 = \frac{\overline{\eta}\theta_d}{(\theta_d+\theta_a)^n}\sum_{i=0}^{n^-}
 \binom{n}{i^+}\theta_d^{n-i^+}\theta_a^{i^+}
 = \frac{\overline{\eta}\theta_d}{(\theta_d+\theta_a)^n}\sum_{j=1}^n
 \binom{n}{j}\theta_d^{n-j}\theta_a^j\\
 = \frac{\overline{\eta}\theta_d}{(\theta_d+\theta_a)^n}((\theta_d+\theta_a)^n-\theta_d^n).
\end{multline*}
\end{proof}

\section{Numerical results}\label{sec:num}
In this section, we compare numerical results for the average velocity of the CTCM and the theoretical results from Theorem~\ref{prop6}.  Additionally, we show numerical results for wait time distributions that are not exponential.

We simulate the CTCM by starting from an initial configuration with all the I-sites attached.  When the first I-site is detached, a new detach time is chosen from the specified distribution and the new centroid location is calculated.  The process evolves in a similar manner for all I-sites.  When an I-site attachment event occurs, we choose a random vector from the specified distribution and add this vector to the centroid location to determine the I-site attachment location.  Since the simulations do not start at a steady state distribution $\rho$, the simulations are allowed to evolve for 10 hours to allow the projection of the distribution to approach the steady-state distribution described in \cref{prop5}.  \cref{fig:NS1} shows the average $y$ velocity of the centroid location over a period of 65 hours for several values of $n$, the number of I-sites, (plotted as $\mathsf{X}$'s).  The solid line is the theoretical result from~\eqref{equ:velocity}.  Three different sets of simulations are shown, all have exponential wait times for both the attach and detach time of the I-sites.  They differ in the values for $\theta_d$ (the detachment rate).  As the figure shows, the numerical simulations agree with the theoretical results.  
\begin{figure}[h]
  \centerline{\includegraphics*[width=.5\textwidth]{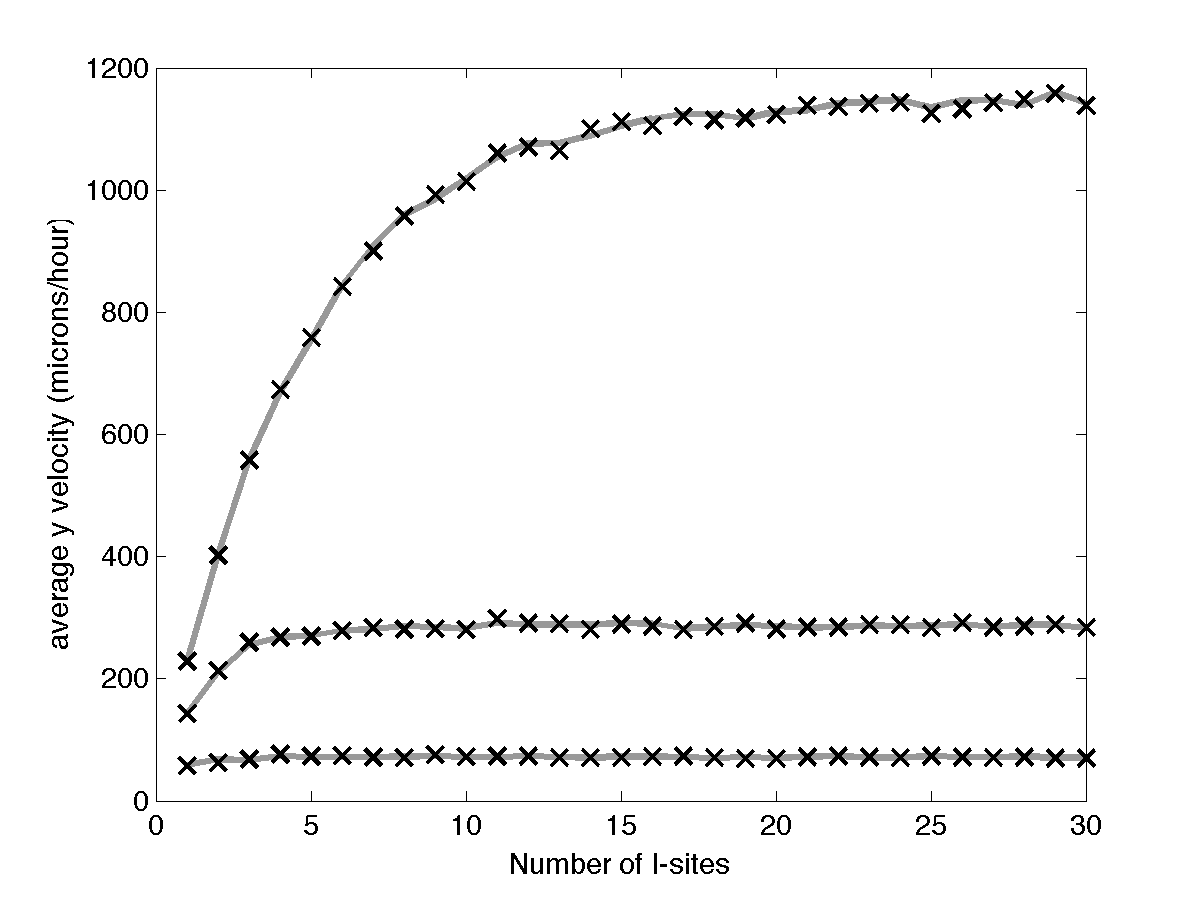}}
\caption{This figure compares the numerical simulations for the
  CTCM with
   the theoretical results (\ref{equ:velocity}).  The `$\mathsf{x}$' denote the numerical
   simulations and the line shows the theoretical results. The
   horizontal axis indicates the number of I-sites for the cell and
   the vertical axis shows the average velocity of the 
   $y$ position of the centroid over a 65 hour period (results are similar for
    the $x$ direction).  The three different sets of `$\mathsf{x}$' and lines in decreasing order are for $\theta_d=\frac{1}{5}$, $\theta_d=\frac{1}{20}$, and $\theta_d=\frac{1}{80}$.  In all the simulations $\theta_a =\frac{1}{20}$; the units are per second.}
\label{fig:NS1}
\end{figure}

For biologically realistic situations, the duration of the adhesion sites may not be exponentially distributed.  Adhesion sites are reinforced or degraded in a manner which seems dependent on the forces applied to the adhesion sites and the amount of time they have been attached.  All this indicates that the attachment/detachment process is complicated.  In one study on dictyostelium discoideum the attachment duration of actin foci was measured to be a "continuous Poisson distribution" \cite{Uchida:2004:DNF}. (The continuous Poisson distribution is a distribution which when rounded to the integers is a Poisson distribution \cite{Marsaglia:1986:IGF}.)  In Figure~\ref{fig:NS2} we compare two simulations where the attachment/detachment processes are not Markovian as indicated by the different distributions for the wait times.  In the figure, simulations with a continuous Poisson, an exponential, and a normal distribution for wait times are shown.  One can see that the non-Markov processes do not follow the same pattern as the Markov process.  Yet, the two non-Markov processes do seem to follow some curve.  In future work we plan to investigate the effect of dropping the Markov property, for a more realistic scenario.  
\begin{figure}[h]
  \centerline{\includegraphics*[width=.5\textwidth]{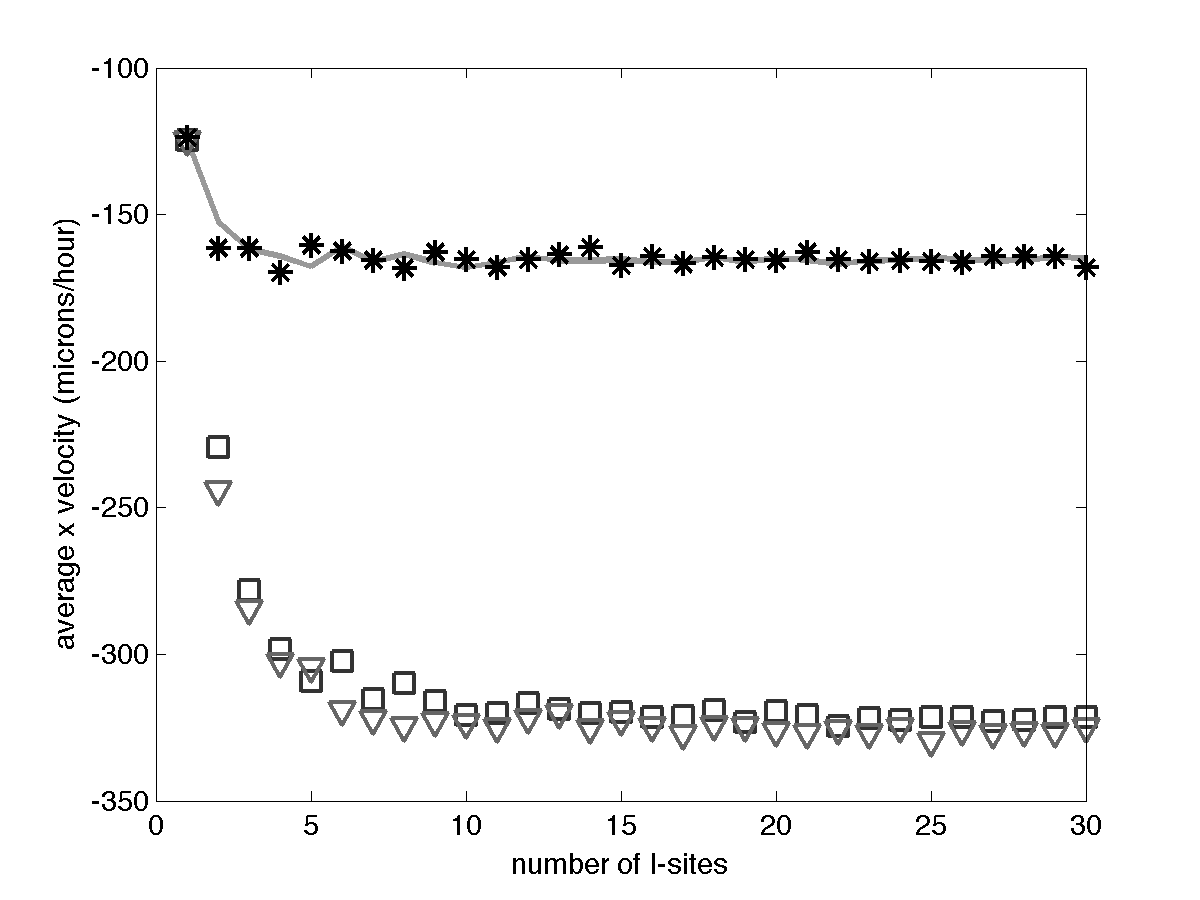}}
  \caption{This figure shows a comparison between the numerical simulations for the
    CTCM with different wait time distributions
    and the theoretical results.
    The symbols indicate the numerical simulations and the line shows the
    theoretical results.  The $\ast$ denote simulations with
    exponential distributions, the $\triangledown$ denote simulations with normal distributions (truncated to prevent negative times) with
    deviation 1, and the $\Box$ denote simulations with "continuous Poisson distributions". The mean time to detachment is 60 seconds and the mean time to attachment is 20 seconds for all the simulations.  The
    horizontal axis indicates the number of I-sites and
    the vertical axis shows the average velocity in the $x$ direction.}
\label{fig:NS2}
\end{figure}
\section{Discussion}\label{sec:discussion}

Although the differential-equation model considered was motivated by the motion of a cell, the results may be relevant to any system of particles or objects undergoing centrally-controlled motion. In this paper we considered the CTCM, a model that paralleled the differential-equation model by effectively setting the intracellular forces equal to infinity. Without assuming the typical white noise hypothesis, we were able to predict the expected velocity of the CTCM using the theory of pure jump-type Markov processes and Markov chains. The key result is Theorem~\ref{prop6}, which gave a formula that predicts the time rate of change of the expected position of the cell. 
This formula will be used to prove that the average velocity of a cell, as predicted by the ordinary differential equation model is dependent on the adhesion dynamics and not the cell force.  The formula gives an indication of this by showing the dependence of the centroid on $\overline{\eta}$ the mean of the perturbation of the adhesion sites, the number of adhesion sites, and the expected attach and detach times.  Of course, the assumption that the adhesion dynamics are Markovian is a simplifying assumption and will need to be modified in the future.  The results may also be considered as a step toward general understanding of differential equation models with randomness. 

\section*{Acknowledgement}
The authors wish to thank the anonymous referees for their helpful feedback and suggestions.
\section{Bibliography}
\bibliographystyle{elsarticle-num}
\bibliography{NSFstochasticgrant}

\begin{thebibliography}{10}
\expandafter\ifx\csname url\endcsname\relax
  \def\url#1{\texttt{#1}}\fi
\expandafter\ifx\csname urlprefix\endcsname\relax\def\urlprefix{URL }\fi
\expandafter\ifx\csname href\endcsname\relax
  \def\href#1#2{#2} \def\path#1{#1}\fi

\bibitem{Krawczyk:1971:PEC}
W.~Krawczyk, A pattern of epidermal cell migration during wound healing, The
  Journal of cell biology 49~(2) (1971) 247--263.

\bibitem{Tanner:2009:CMC}
K.~Tanner, D.~Ferris, L.~Lanzano, B.~Mandefro, W.~Mantulin, D.~Gardiner,
  E.~Rugg, E.~Gratton, Coherent movement of cell layers during wound healing by
  image correlation spectroscopy, Biophysical journal 97~(7) (2009) 2098--2106.

\bibitem{Yilmaz:2010:MMM}
M.~Yilmaz, G.~Christofori, Mechanisms of motility in metastasizing cells,
  Molecular Cancer Research 8~(5) (2010) 629--642.

\bibitem{Keller:2000:MCE}
R.~Keller, L.~Davidson, A.~Edlund, T.~Elul, M.~Ezin, D.~Shook, P.~Skoglund,
  Mechanisms of convergence and extension by cell intercalation, Philosophical
  Transactions of the Royal Society of London. Series B: Biological Sciences
  355~(1399) (2000) 897--922.

\bibitem{Mammoto:2010:MCT}
T.~Mammoto, D.~Ingber, Mechanical control of tissue and organ development,
  Development 137~(9) (2010) 1407--1420.

\bibitem{Rieu:2009:MDS}
J.-P. Rieu, T.~Saito, H.~Delano{\"e}-Ayari, Y.~Sawada, R.~R. Kay, Migration of
  dictyostelium slugs: anterior-like cells may provide the motive force for the
  prespore zone, Cell Motil Cytoskeleton 66~(12) (2009) 1073--86.
\newblock \href {http://dx.doi.org/10.1002/cm.20411}
  {\path{doi:10.1002/cm.20411}}.

\bibitem{Dallon:2013:FBM}
J.~C. Dallon, M.~Scott, W.~V. Smith, A force based model of individual cell
  migration with discrete attachment sites and random switching terms, Journal
  of Biomechanical Engineering 135~(7) (2013) 071008--071008--10.
\newblock \href {http://dx.doi.org/10.1115/1.4023987}
  {\path{doi:10.1115/1.4023987}}.

\bibitem{Dallon:2013:CSI}
J.~C. Dallon, E.~J. Evans, C.~P. Grant, W.~V. Smith,
  \href{http://dx.doi.org/10.1016/j.mbs.2013.09.005}{Cell speed is independent
  of force in a mathematical model of amoeboidal cell motion with random
  switching terms}, Math. Biosci. 246~(1) (2013) 1--7.
\newblock \href {http://dx.doi.org/10.1016/j.mbs.2013.09.005}
  {\path{doi:10.1016/j.mbs.2013.09.005}}.
\newline\urlprefix\url{http://dx.doi.org/10.1016/j.mbs.2013.09.005}

\bibitem{Kallenberg}
O.~Kallenberg, \href{http://opac.inria.fr/record=b1098179}{Foundations of
  modern probability}, Probability and its applications, Springer, New York,
  2002.
\newline\urlprefix\url{http://opac.inria.fr/record=b1098179}

\bibitem{Friedl:2009:CCM}
P.~Friedl, D.~Gilmour, Collective cell migration in morphogenesis, regeneration
  and cancer, Nature Reviews Molecular Cell Biology 10~(7) (2009) 445--457.

\bibitem{Ulrich:2009:TCM}
F.~Ulrich, C.-P. Heisenberg, Trafficking and cell migration, Traffic 10~(7)
  (2009) 811--8.
\newblock \href {http://dx.doi.org/10.1111/j.1600-0854.2009.00929.x}
  {\path{doi:10.1111/j.1600-0854.2009.00929.x}}.

\bibitem{Gumbiner:1996:CAM}
B.~M. Gumbiner, Cell adhesion: the molecular basis of tissue architecture and
  morphogenesis, Cell 84~(3) (1996) 345--57.

\bibitem{Dallon:2004:HCM}
J.~C. Dallon, H.~G. Othmer, How cellular movement determines the collective
  force generated by the {{\em Dictyostelium discoideum}} slug 231 (2004)
  203--222.

\bibitem{Cinlar}
E.~C{\i}nlar, \href{http://dx.doi.org/10.1007/978-0-387-87859-1}{Probability
  and stochastics}, Vol. 261 of Graduate Texts in Mathematics, Springer, New
  York, 2011.
\newblock \href {http://dx.doi.org/10.1007/978-0-387-87859-1}
  {\path{doi:10.1007/978-0-387-87859-1}}.
\newline\urlprefix\url{http://dx.doi.org/10.1007/978-0-387-87859-1}

\bibitem{MeynTweedie}
S.~Meyn, R.~L. Tweedie,
  \href{http://dx.doi.org/10.1017/CBO9780511626630}{Markov chains and
  stochastic stability}, 2nd Edition, Cambridge University Press, Cambridge,
  2009.
\newblock \href {http://dx.doi.org/10.1017/CBO9780511626630}
  {\path{doi:10.1017/CBO9780511626630}}.
\newline\urlprefix\url{http://dx.doi.org/10.1017/CBO9780511626630}

\bibitem{Ross}
S.~Ross, A first course in probability, ninth Edition, Pearson, Boston, 2004.

\bibitem{Minassian:2007}
D.~Minassian, A mean value theorem for one-sided derivatives, Amer. Math.
  Monthly 114~(1) (2007) 28.

\bibitem{Uchida:2004:DNF}
K.~Uchida, S.~Yumura, Dynamics of novel feet of dictyostelium cells during
  migration, Journal of Cell Science 117~(8) (2004) 1443--1455.
\newblock \href {http://dx.doi.org/DOI 10.1242/jcs.01015} {\path{doi:DOI
  10.1242/jcs.01015}}.

\bibitem{Marsaglia:1986:IGF}
G.~Marsaglia, The incomplete $\gamma$ function as a continuous poisson
  distribution, Computers \& Mathematics with Applications 12~(5) (1986)
  1187--1190.

\end{thebibliography}

\end{document}